\newtheorem{lemma}{Lemma}
\newtheorem{assertion}{Assertion}[lemma]
\theoremstyle{remark}
\def\Int{{\textnormal{Int}}}
\def\nbhd{neighborhood}
\def\R{{\mathbb R}}
\def\Cl{Cl}
\def\and{{\rm \ and\ }}
\begin{document}
\title[Morse Cells]{Morse Cells}
\author[H. King]{Henry C. King}
\address{Department of Mathematics\\
         University of Maryland\\
         College Park, MD 20742-4015}
\email{hck@math.umd.edu}
\urladdr{http://www.math.umd.edu/~hck/}

       
\keywords{CW complex}
\subjclass[2000]{Primary: 57N50; Secondary: 57N37.}
\date{\today}
\maketitle

\section{Introduction}
This paper is in the category {\em proof of something not deep or significant which I knew was true 
and told my students was true but 
for which I am not aware of a published reference}.
In particular, when encountering theorem 3.5 in Milnor's ``Morse Theory" which says a manifold
has the homotopy type of a CW complex with a $k$ cell for each index $k$ critical point of a Morse function,
I would remark that in fact as long as the gradientlike vector field is generic, the manifold
is a CW complex and the cells are the (closures of) the stable manifolds.
So to be honest I should write down a proof.
Doing so led to investigating  untico resolution towers, manifolds with nice corners on the boundary,
and other amusing notions.

Since posting version 1 of this paper others have kindly pointed out previous work on some of these things.
In particular the result above has appeared before, so I could have saved myself the trouble.
Indeed it appears there is enough activity that I was wrong to consider the question insignificant
as it appears to lead to much interesting work.
I stand corrected.
I'll leave this paper up though since I may end up using the work on resolution towers and stratifications elsewhere.
There is a nice discussion in http://mathoverflow.net/questions/86610
particularly the post by Liviu Nicolaescu which indicates work by Lizhen Qin see \cite{Q1} and \cite{Q2},
reference to \cite{BFK}, as well as his own \cite{N}.
There are a number of references to Laudenbach's appendix
to \cite{BZ} proving this result early on.
The post http://mathoverflow.net/questions/11375 is also relevant.
Many thanks to Patrick Massot for first alerting me to all this.

Manifolds with convex corners on the boundary were apparently developed in the early 1960s by Cerf
although I presume there was earlier work, particularly at the elementary level used here,
 since they arise whenever one takes the
product of manifolds with boundary.
More recently \cite{corners} is easily available online and gives references to earlier work.
Of course I should have included references to early work on Whitney stratifications, Thom stratifications
(I guess these are often called Thom-Mather stratifications but I learned them from Mather),
for example \cite{Thom}, \cite{GWPL}, and \cite{Mather}.
I have a vague recollection that Thom was originally thinking of an approach to stratifications
closer to the resolution towers given here, but later switched to the Thom data approach refined by Mather,
but this may be erroneous.
The resolution tower approach was mentioned at the end of \cite{KT}
and one motivation for this paper was to expand on those remarks.

\section{Manifolds with Convex Corners on the Boundary}

Usually corners of a smooth manifold are swept under the rug, smoothed out as soon as possible 
when they appear and viewed as a minor nuisance, not to be thought of too much.
However, while writing this paper, it became clear that it would be useful to use a certain type of corner on the boundary
of a smooth manifold and indeed to embrace this extra structure.

A smooth manifold with convex corners on the boundary is a manifold
with charts based on open subsets of $[0,\infty)^n$.
The corners give a stratification of the manifold according to the depth of the corner, a depth $k$
corner point in an $n$ dimensional smooth manifold with convex corners on the boundary
looks like $0$ in $[0,\infty)^k\times \R^{n-k}$.
We let $\partial_k M$ denote the set of points of a manifold $M$ with depth $k$
and $\bar{\partial}_k M = \Cl \partial_k M = \bigcup_{j\ge k}\partial_j M$.

\begin{lemma}\label{depth_defined}
	The depth of a point in a manifold with convex corners on the boundary is well defined.
\end{lemma}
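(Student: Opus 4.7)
The plan is to introduce an intrinsic (chart-free) invariant $d(p)$ attached to a point $p$ of a smooth manifold with convex corners, and then show that in any chart exhibiting $p$ as a depth-$k$ corner one has $d(p)=k$. Well-definedness of depth is then immediate, since two charts at $p$ reporting depths $k$ and $k'$ would both compute the same $d(p)$.

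Concretely, let
\[
d(p) \;=\; \dim_{\R}\mathrm{span}\bigl\{\,df(p)\in T_p^*M : f\text{ smooth and }\ge 0\text{ on a \nbhd\ of }p,\ f(p)=0\,\bigr\}.
\]
This depends only on the smooth structure, not on any chart. I would first show $d(p)\ge k$: in a chart $\phi\colon U\to[0,\infty)^k\times\R^{n-k}$ with $\phi(p)=0$, the pullbacks of the first $k$ coordinate functions are smooth, nonnegative, vanish at $p$, and have linearly independent differentials there because $\phi$ is a local diffeomorphism onto its image.

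The main step, which I expect to be the only point requiring an argument, is the reverse inequality $d(p)\le k$. Work in the same chart and let $\tilde f=f\circ\phi^{-1}$. For any $v$ in the closed convex cone $C=[0,\infty)^k\times\R^{n-k}$, the ray $t\mapsto tv$ stays in $C$ for small $t\ge 0$, so $\tilde f(tv)\ge 0=\tilde f(0)$ forces $d\tilde f(0)\cdot v\ge 0$. Thus $d\tilde f(0)$ is a linear functional on $\R^n$ that is nonnegative on $C$. Since $C$ contains the linear subspace $L=\{0\}^k\times\R^{n-k}$, any such functional must vanish identically on $L$, and hence lies in the $k$-dimensional subspace spanned by $dx_1,\dots,dx_k$. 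Therefore every admissible $df(p)$ lies in a fixed $k$-dimensional subspace of $T_p^*M$, giving $d(p)\le k$.

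Combining the two inequalities yields $d(p)=k$, so depth is well-defined. The sole obstacle is the convex-duality step isolating the lineality space of the model cone, and this reduces to the elementary observation that a linear functional nonnegative on a cone must vanish on the cone's maximal linear subspace.
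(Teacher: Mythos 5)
Your proof is correct, and it takes a dual route to the paper's. The paper works in the tangent space: it defines the tangent cone at $p$ (velocities of curves $\alpha\colon([0,\infty),0)\to(M,p)$), observes that in a depth-$k$ chart this cone is exactly $[0,\infty)^k\times\R^{n-k}$, and reads off $k$ as $\dim M$ minus the dimension of the lineality space (maximal linear subspace) of that cone. You work instead in the cotangent space: you take the dual object, the cone of differentials at $p$ of smooth nonnegative functions vanishing at $p$, and show its span is exactly the annihilator of the model cone's lineality space, a $k$-dimensional subspace of $T_p^*M$. Both reduce to the same elementary convex-geometry fact — the lineality space of $[0,\infty)^k\times\R^{n-k}$ is $\{0\}^k\times\R^{n-k}$ — applied on opposite sides of the duality. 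The paper's version is slightly shorter because the tangent cone equals the model cone directly, whereas you need the extra (but easy) step that a functional nonnegative on a cone annihilates its lineality space and the reverse inequality via the coordinate functions. Your version has the mild advantage of producing the depth as a dimension (of the span of admissible differentials) rather than a codimension.
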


\begin{proof}
	For any point $x$ in a manifold $M$ with convex corners on the boundary,
	define the tangent cone at $x$ to be the subset of the tangent space $T_xM$ of $M$ at $x$
	which are velocities at $x$ of a curve in $M$.  
	In other words, take a smooth curve $\alpha\colon ([0,\infty), 0)\to (M,x)$, then 
	$d\alpha(t)/dt|_{t=0}$ is in the tangent cone at $x$.
	Then note that $\dim M$ minus the depth of $x$ is the maximum dimension of a linear subspace
	of $T_xM$ which is contained in the tanget cone at $x$.
\end{proof}

If $N\subset M$ is a submanifold, we say that $N$ has boundary compatible with $M$ if at every point
of $N$, the pair $(M,N)$ is locally diffeomorphic to the pair $[0,\infty)^k\times (\R^{m-k},\R^{n-k})$,
where $k$ is the depth of the point in both $M$ and $N$.

If $h\colon M\to N$ is a smooth map between manifolds with convex corners on the boundary,
we say that $h$ is a strong submersion if it locally looks like projection of
$[0,\infty)^k\times \R^{n-k}\times [0,\infty)^\ell\times \R^{m-n-\ell}$ to $[0,\infty)^k\times \R^{n-k}$.

\begin{lemma}\label{submersion}
	Suppose $h\colon M\to N$ is a smooth map between manifolds with convex corners on the boundary.
	The following are equivalent.
	\begin{enumerate}
		\item $h$ is a strong submersion
		\item For any $x\in M$, suppose $x$ has depth $\ell$, $h(x)$ has depth $k$, and we choose local coordinates $g\colon (U,h(x))\to ([0,\infty)^k\times \R^{n-k},0)$
		in a \nbhd\ $U$ of $h(x)$.  Then 
		there are local coordinates 
		$$f\colon (V,x)\to ([0,\infty)^k\times \R^{n-k}\times [0,\infty)^{\ell-k}\times \R^{m-n-\ell+k},0)$$
		in a \nbhd\ $V$ of $x$ so that $ghf^{-1}$ is the restriction of projection 
		$$[0,\infty)^k\times \R^{n-k}\times [0,\infty)^{\ell-k}\times \R^{m-n-\ell+k}\to [0,\infty)^k\times \R^{n-k}.$$
	\end{enumerate}
\end{lemma}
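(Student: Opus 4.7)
\bigbreak
\noindent\emph{Proof plan.}
The implication (2) $\Rightarrow$ (1) is essentially free: condition (2) says that for \emph{any} choice of local coordinates at $h(x)$ we can cook up matching coordinates at $x$ making $h$ look like the standard projection. Picking any such $g$ (e.g.\ one witnessing that $h(x)$ has depth $k$) produces the charts required by the definition of strong submersion, so $h$ is a strong submersion.

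For (1) $\Rightarrow$ (2), the plan is to start with the charts guaranteed by the definition and then correct the target chart by modifying the source chart. Since $h$ is a strong submersion, there exist local coordinates $\tilde{g}\colon(\tilde{U},h(x))\to([0,\infty)^k\times \R^{n-k},0)$ and
$$\tilde{f}\colon(\tilde{V},x)\to([0,\infty)^k\times \R^{n-k}\times [0,\infty)^{\ell-k}\times \R^{m-n-\ell+k},0)$$
such that $\tilde{g}h\tilde{f}^{-1}$ equals the projection $p$ onto the first two factors. Write $\phi = g\tilde{g}^{-1}$; on (a neighborhood of $0$ in) $\tilde{g}(U\cap \tilde{U})$ this is a diffeomorphism of open subsets of $[0,\infty)^k\times \R^{n-k}$ fixing the origin. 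Note that $\phi$ automatically preserves the corner stratification, as depth is intrinsic by Lemma~\ref{depth_defined}; in particular it really does take values in $[0,\infty)^k\times \R^{n-k}$.

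Now define $\psi\colon(a,b)\mapsto(\phi(a),b)$ on the appropriate open neighborhood of $0$ in $[0,\infty)^k\times \R^{n-k}\times [0,\infty)^{\ell-k}\times \R^{m-n-\ell+k}$. This $\psi$ is a diffeomorphism of open sets in the model space, and it satisfies $\phi\circ p = p\circ\psi$ by construction. Set $f=\psi\circ\tilde{f}$, shrinking $V$ if necessary so that $f(V)$ lies in the domain of $\psi$ and $h(V)\subset U$. Then $f$ is a valid coordinate chart at $x$, and
$$ghf^{-1} \;=\; g\tilde{g}^{-1}\bigl(\tilde{g}h\tilde{f}^{-1}\bigr)\psi^{-1} \;=\; \phi\circ p\circ \psi^{-1} \;=\; p,$$
which is exactly what condition (2) demands.

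The only step that requires any thought is checking that $\phi$, originally just a diffeomorphism of open subsets of $\R^n$, restricts to a diffeomorphism of open subsets of $[0,\infty)^k\times \R^{n-k}$, so that the extension $\psi$ lands in the correct model space; this is exactly where Lemma~\ref{depth_defined} is used. Everything else is a direct computation.
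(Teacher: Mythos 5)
Your proof is correct and follows essentially the same route as the paper: the paper also takes charts $g',f'$ (your $\tilde g,\tilde f$) from the definition of strong submersion and corrects the source chart by $ff'^{-1}(u,v,w,y)=(gg'^{-1}(u,v),w,y)$, which is precisely your $\psi$. Your added remark that the transition map $\phi=g\tilde g^{-1}$ lands in $[0,\infty)^k\times\R^{n-k}$ is a useful clarification the paper leaves implicit.
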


\begin{proof}
	We need to show that 1 implies 2.
	Since $h$ is a strong submersion we know there are local coordinates 
	$f'\colon (V',x)\to ([0,\infty)^k\times \R^{n-k}\times [0,\infty)^{\ell-k}\times \R^{m-n-\ell+k},0)$ in a \nbhd\ $V'$ of $x$ and $g'\colon (U',h(x))\to ([0,\infty)^k\times \R^{n-k},0)$
	in a \nbhd\ $U'$ of $h(x)$ so that $g'hf'{}^{-1}(u,v,w,y)= (u,v)$.
	Define $f$ by $ff'{}^{-1}(u,v,w,y) = (u',v',w,y)$ where $(u',v')=gg'{}^{-1}(u,v)$.
	Then $$ghf^{-1}(u',v',w,y)= ghf'{}^{-1}(u,v,w,y)= gg'^{-1}(u,v) = (u',v').$$
\end{proof}

\begin{lemma}
	The composition of two strong submersions is a strong submersion.
\end{lemma}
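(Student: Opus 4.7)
The plan is to apply Lemma \ref{submersion} (the characterization of strong submersions given prescribed target coordinates) twice, first to the outer map and then to the inner one, and then compose. Let $h_1\colon M\to N$ and $h_2\colon N\to P$ be strong submersions, let $x\in M$, $y=h_1(x)$, $z=h_2(y)$, and let $k,j,\ell$ be the depths of $z,y,x$ respectively. Note that for any strong submersion the source depth is at least the target depth, so $k\le j\le\ell$.

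First I would invoke Lemma \ref{submersion} applied to $h_2$ at $y$: choose arbitrary local coordinates $g\colon(U,z)\to([0,\infty)^k\times\R^{p-k},0)$ near $z$ and obtain local coordinates
$$f_2\colon(V,y)\to([0,\infty)^k\times\R^{p-k}\times[0,\infty)^{j-k}\times\R^{n-p-j+k},0)$$
so that $gh_2f_2^{-1}$ is projection onto the first two factors. Regrouping the two copies of $[0,\infty)$ (a permutation of coordinates), we may regard $f_2$ as a chart of $N$ of the standard form $([0,\infty)^j\times\R^{n-j},0)$ required as the target chart in Lemma \ref{submersion}.

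Next I would apply Lemma \ref{submersion} to $h_1$ at $x$ using this reordered $f_2$ as the prescribed chart at $y$: this yields local coordinates
$$f_1\colon(W,x)\to([0,\infty)^j\times\R^{n-j}\times[0,\infty)^{\ell-j}\times\R^{m-n-\ell+j},0)$$
with $f_2h_1f_1^{-1}$ being the projection to the first two factors. Reordering the corner coordinates back, the codomain of $f_1$ is identified with $[0,\infty)^k\times\R^{p-k}\times[0,\infty)^{\ell-k}\times\R^{m-p-\ell+k}$, and composing gives
$$gh_2h_1f_1^{-1}=(gh_2f_2^{-1})\circ(f_2h_1f_1^{-1}),$$
which is projection onto $[0,\infty)^k\times\R^{p-k}$. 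Hence $h_2h_1$ is a strong submersion.

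The only real subtlety, and the point that requires the most care, is tracking the bookkeeping of the corner coordinates: when $f_2$ is used as a target chart for $h_1$, the $k$ corner coordinates coming from the $P$-direction and the $j-k$ corner coordinates normal to the fibers of $h_2$ must be combined into a single block of $j$ corner coordinates in the form demanded by Lemma \ref{submersion}, and then split apart again after $f_1$ is produced. Once the reordering conventions are fixed, everything else is just composition of projections.
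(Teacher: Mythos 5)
Your proof is correct and follows essentially the same route as the paper: apply Lemma~\ref{submersion} first to the outer map to manufacture an adapted chart on the middle manifold, then apply it again to the inner map with that chart prescribed, and compose the two projections. The paper glosses over the coordinate regrouping that you spell out, but it is the same argument.
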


\begin{proof}
	Suppose $h\colon M\to N$ and $h'\colon N\to N'$ are strong submersions.
	Pick any $x\in M$ and any local coordinates $g'\colon (V',h'h(x))\to ([0,\infty)^{k'}\times \R^{n'-k'},0)$.
	By Lemma \ref{submersion} there are local coordinates
	$$g\colon (V,h(x))\to ([0,\infty)^{k'}\times \R^{n'-k'}\times [0,\infty)^{k-k'}\times \R^{n-n'-k +k'},0)$$
	so that $g'h'g^{-1}$ is projection and there are local coordinates
	$$f\colon (U,x)\to ([0,\infty)^{k'}\times \R^{n'-k'}\times [0,\infty)^{k-k'}\times \R^{n-n'-k +k'}
	\times [0,\infty)^{\ell-k}\times \R^{m-n-\ell +k},0)$$
	so that $ghf^{-1}$ is projection.
	Then $g'h'hf^{-1} = g'h'g^{-1}ghf^{-1}$ is projection.
\end{proof}

\begin{lemma}\label{depth_submersion}
	Suppose $h\colon M\to N$ is a smooth map between manifolds with convex corners on the boundary
	and $h$ is a submersion, i.e., $dh$ has rank $\dim N$ everywhere.
	Suppose $h$ preserves depth, i.e., for each $k$, $h(\partial_k M)\subset \partial_k N$.
	Then $h$ is a strong submersion.
\end{lemma}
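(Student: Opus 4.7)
The plan is to fix a point $x\in M$ of depth $\ell$. Since $h$ preserves depth, $h(x)$ also has depth $\ell$, so the integer $k$ in the strong-submersion model of Lemma \ref{submersion} must equal $\ell$, and what needs to be produced is a chart around $x$ in which $h$ becomes the projection $[0,\infty)^\ell\times\R^{m-\ell}\to[0,\infty)^\ell\times\R^{n-\ell}$. By Lemma \ref{submersion} I may fix arbitrary local coordinates $g\colon(U,h(x))\to([0,\infty)^\ell\times\R^{n-\ell},0)$ on $N$ in advance. Starting from any local chart $f_0\colon(V_0,x)\to([0,\infty)^\ell\times\R^{m-\ell},0)$, I write $ghf_0^{-1}(u,y)=(V(u,y),Z(u,y))$ with $V=(V_1,\ldots,V_\ell)\in[0,\infty)^\ell$ and $Z\in\R^{n-\ell}$, and then modify $f_0$ in two stages: first to straighten $V$, and then to straighten $Z$.

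For the first stage, the open depth-$1$ portion of each face $\{u_i=0\}$ near $x$ is connected, so depth preservation carries it into a single face $\{v_{\sigma(i)}=0\}$ of $N$; by continuity $V_{\sigma(i)}$ vanishes on all of $\{u_i=0\}$. Inspecting depth-$(\ell-1)$ points of $M$, namely those where a single $u_{i_0}$ is positive, shows that $\sigma\colon\{1,\ldots,\ell\}\to\{1,\ldots,\ell\}$ must be injective, hence a bijection, and after absorbing a permutation of coordinates on $N$ into $g$ I may assume $\sigma$ is the identity. Hadamard's lemma applied to a smooth extension then yields $V_i(u,y)=u_iW_i(u,y)$ with $W_i$ smooth, with $W_i\ge 0$ because $V_i\ge 0$, and with $W_i(0,0)>0$ because the submersion condition prevents $dV_i$ from vanishing at the origin (otherwise the $i$th row of the full Jacobian would be zero). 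Consequently $\phi(u,y)=(V(u,y),y)$ has invertible differential at $0$, and since $V_i\ge 0$ iff $u_i\ge 0$, it restricts to a diffeomorphism of manifolds with convex corners near $x$. Replacing $f_0$ by $\phi\circ f_0$ turns $ghf_0^{-1}$ into $(u,y)\mapsto(u,Z'(u,y))$.

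For the second stage, the Jacobian of the new $ghf_0^{-1}$ at the origin is block triangular with top-left block $I_\ell$, so full rank $n$ forces $\partial Z'/\partial y$ to have rank $n-\ell$ there. After permuting the $y$-coordinates I may assume the first $n-\ell$ columns of this block are independent at $0$; then the map $(u,y)\mapsto(u,Z'(u,y),y_{n-\ell+1},\ldots,y_{m-\ell})$ is a local diffeomorphism by the inverse function theorem. Since only the $y$-variables are mixed, this change of coordinates respects the corner structure, and in the resulting chart $ghf^{-1}$ is exactly the required projection.

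The main obstacle is the first stage, specifically the use of depth preservation to show that the induced map $\sigma$ on faces is a bijection and that the Hadamard factor $W_i$ is strictly positive at $0$; once these are in hand, the remainder is two routine applications of the inverse function theorem, carried out so as not to disturb the $[0,\infty)^\ell$ factor.
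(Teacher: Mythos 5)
Your two-stage coordinate construction is a legitimate alternative to the paper's induction on depth, and most of it checks out, but there is a genuine gap in the sentence ``Inspecting depth-$(\ell-1)$ points of $M$, namely those where a single $u_{i_0}$ is positive, shows that $\sigma$ must be injective.'' Depth preservation alone does not force the induced face map to be injective. Consider $h\colon[0,\infty)^2\to[0,\infty)^2$ given by $h(u_1,u_2)=(u_1u_2,\,u_1+u_2)$: the interior goes to the interior, each open face goes to a depth-$1$ point, and the corner goes to the corner, so depth is preserved everywhere; yet $V_1=u_1u_2$ vanishes on both faces $\{u_1=0\}$ and $\{u_2=0\}$, so $\sigma(1)=\sigma(2)=1$. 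What rules this out under the hypotheses of the lemma is not depth preservation but the rank condition: if $\sigma(i_1)=\sigma(i_2)=j$ with $i_1\ne i_2$, then $V_j$ vanishes on both $\{u_{i_1}=0\}$ and $\{u_{i_2}=0\}$, so two applications of Hadamard give $V_j=u_{i_1}u_{i_2}\tilde W$, whence $dV_j(0,0)=0$ and the $j$th row of the Jacobian of $ghf_0^{-1}$ vanishes at the origin, contradicting rank $n$. (In the example above $h$ is indeed not a submersion at the corner.) This is exactly the style of argument you already deploy one sentence later to get $W_i(0,0)>0$; you need to invoke it one step earlier to get injectivity in the first place. The paper's inductive proof uses the rank hypothesis at the corresponding spot, via $\partial h_1/\partial y_1(0)\ne 0$, so both arguments correctly isolate where the submersion condition is indispensable; your draft just mislabels the source of injectivity.

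With that repair the remainder is sound: the connectedness of the open depth-$1$ portion of each face after restricting to a suitably small chart, the local constancy of the face assignment, the Hadamard factorization $V_i=u_iW_i$ with $W_i\ge 0$ and $W_i(0,0)>0$, the observation that $\phi(u,y)=(V(u,y),y)$ is a corner-preserving local diffeomorphism since $V_i\ge 0$ iff $u_i\ge 0$, and the final inverse-function-theorem step mixing only the $y$-variables all go through. The route is genuinely different from the paper's (a direct two-stage straightening rather than an induction on depth), but neither simpler nor harder; both hinge on the same single use of the rank condition to control the behavior transverse to the corner faces.
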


\begin{proof}
	Pick any $x\in M$ with depth $k$. If $k=0$ then $h$ is trivially a strong submersion at $x$, so suppose $k>0$.
	Choose any local coordinates 
		$f\colon (U,x)\to ([0,\infty)^{k}\times \R^{m-k},0)$ and
		$g\colon (V,h(x))\to ([0,\infty)^{k}\times \R^{n-k},0)$.
	After restricting $U$ we may as well assume that $f^{-1}(0\times (0,\infty)^{k-1}\times \R^{m-k})$
	is connected, for example if $f(U)$ is convex.
	Then after reordering coordinates we may suppose that 
	$$ghf^{-1}(0\times (0, \infty)^{k-1}\times \R^{m-k})\subset 0\times (0,\infty)^{k-1}\times \R^{n-k}$$
	since $h$ preserves the depth 1 points.
	By induction on $k$ we know that $ghf^{-1}$ restricts to a strong submersion from
	$f(U)\cap 0\times [0, \infty)^{k-1}\times \R^{m-k}$ to $0\times [0, \infty)^{k-1}\times \R^{n-k}$.
	So we may choose local coordinates $f'\colon (U',0)\to  [0,\infty)^{k-1}\times \R^{m-k}$
	in a \nbhd\ of $0$ in $0\times [0, \infty)^{k-1}\times \R^{m-k}$
	so that $ghf^{-1}f'{}^{-1}(y_2,y_3,\ldots,y_m) = (0,y_2,\ldots,y_n)$.
	So after composing $f$ with the map $y\mapsto (y_1,f'(y_2,\ldots,y_m))$ we may as well suppose that 
	$ghf^{-1}(0,y_2,\ldots,y_m) = (0,y_2,\ldots,y_n)$.	
	
	Let $h_i(y)$ denote the $i$-th coordinate of $ghf^{-1}(y)$.
	Since $dh$ has rank $n$ at $x$ we know $\partial h_1/\partial y_1(0)\ne 0$.
	Since $h$ preserves depth we know that for each $i\le k$, there is a $j_i\le k$ so that 
	for any $y$ near 0 with $y_i=0$ and $y_j\ne 0$ for $j\ne i$
	then $h_{j_i}(y)=0$. We know $j_1=1$.  We also know $j_i=i$ for $i>1$ since if we take $y$ near 0 with $y_1=y_i=0$ and $y_j\ne 0$
	for $j\ne 1,i$ then $h_j(y)=0$ iff $j=1$ or $i$ but then since $\partial h_1/\partial y_1\ne 0$
	if we make $y_1$ small and nonzero, only $h_i(y)=0$.
	
	Consider the change of coordinates $z_i = h_i(y)$ for $i\le n$ and $z_i=y_i$ for $i>n$.
	After incorporating this change of coordinates in $f$ we see $ghf^{-1}(y_1,y_2,\ldots,y_m) = (y_1,y_2,\ldots,y_n)$
	and thus $h$ is a strong submersion.
\end{proof}

\section{Stratified sets}

For this paper, a stratified set $X$ is a locally compact, second countable, Hausdorff, necessarily paracompact space 
which we denote $|X|$
and a decomposition of $|X|=\bigcup S$
into a locally finite union of disjoint subsets $S$ called strata, each of which has the
structure of a smooth manifold with convex corners on the boundary.
I feel more comfortable if we specify that the dimensions of the strata be bounded,
but perhaps this is not necessary.
We assume each stratum is locally closed in $|X|$.
If  $T$ and $S$ are disjoint strata and $T\cap \Cl S$ is nonempty we ask that $T\subset \Cl S$
and $\bar{\partial}_k T = T \cap \Cl \partial_k S$ for all $k$.

If $T\cap \Cl S$ is nonempty we say $T\preceq S$ and if in addition $T\ne S$ we say $T\prec S$.

The $k$ skeleton of a stratified set  $X$ is the union of its strata of dimension $\le k$.
This is also a stratified set.
The depth\footnote{There is also the notion of the depth of a stratified set,
	being the maximal difference in the dimensions of two strata of $X$.
	The only possible confusion I can think of is when $X$ is a single point in which case both notions of depth are zero.
	In any case we won't use this other notion of depth in this paper.}
 of a point in $|X|$ is the  depth of $x$ in the stratum containing $x$.
Note that the  depth $k$ points of $X$ inherit the structure of a stratified set without boundary.

If $N$ is a smooth manifold and $f\colon |X|\to N$ we say $f$ is smooth if the restriction of $f$ to each stratum 
of $X$ is $C^\infty$.

If $U$ is an open subset of $|X|$ then $U$ inherits the structure of a stratified set from $X$, the strata are $S\cap U$
for strata $S$ of $X$ which intersect $U$.

I fully expect that somewhere below I will inadvertently write $X$ where I should write $|X|$ and apologize in advance for the imprecision.

\begin{lemma}\label{compact-finite}
	If $K$ is a compact subset of a stratified set there are only finitely many strata $S$ so that
	$\Cl S\cap K$ is nonempty.
\end{lemma}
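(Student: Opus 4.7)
The plan is to combine local finiteness with the frontier axiom $T \cap \Cl S \ne \emptyset \Rightarrow T \preceq S$. The statement about $\Cl S \cap K$ is stronger than the analogous statement about $S \cap K$, since a stratum $S$ might avoid $K$ altogether while having a lower-dimensional stratum in its frontier that meets $K$; the frontier axiom is precisely what lets us control this.

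First I would establish the easier statement that only finitely many strata $T$ actually meet $K$. Every point of $|X|$ has an open neighborhood meeting only finitely many strata (the definition of local finiteness of the decomposition), so by compactness $K$ is covered by finitely many such neighborhoods $U_1,\ldots,U_n$. The collection of strata meeting $U_1\cup\cdots\cup U_n$ is then a finite set $\mathcal{T}$, and any stratum $T$ with $T\cap K\ne\emptyset$ must lie in $\mathcal{T}$.

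Next I would reduce the original claim to this easier one. Suppose $\Cl S\cap K\ne\emptyset$ and pick $x\in \Cl S\cap K$; let $T$ be the stratum containing $x$. Then $T\cap K\ni x$, so $T\in\mathcal{T}$, and $T\cap \Cl S\ni x$, so by the hypothesis on the stratification $T\preceq S$. Hence the strata $S$ with $\Cl S\cap K\ne\emptyset$ are exactly those lying in $\bigcup_{T\in\mathcal{T}}\{S:T\preceq S\}$, and it suffices to show that each $T\in\mathcal{T}$ lies below only finitely many strata.

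Finally, for a fixed $T\in\mathcal{T}$, pick any point $y\in T$ and use local finiteness once more to find an open neighborhood $V$ of $y$ meeting only finitely many strata. Whenever $T\preceq S$ we have $y\in T\subset \Cl S$, so every neighborhood of $y$ meets $S$; in particular $V\cap S\ne\emptyset$, forcing $S$ to be one of the finitely many strata meeting $V$. This gives finiteness of $\{S:T\preceq S\}$ and completes the proof. I do not anticipate any real obstacle here; the argument is a two-step application of local finiteness, with the frontier axiom serving as the bridge between the two steps.
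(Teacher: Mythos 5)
Your proof is correct, but it takes a longer route than the paper's and invokes a hypothesis that isn't needed. The paper's entire proof is: cover $K$ by finitely many open sets $V_1,\ldots,V_n$, each meeting only finitely many strata, which exist by local finiteness and compactness. The implicit observation that finishes it is purely topological: for an \emph{open} set $V$ and any set $S$, $V\cap \Cl S\ne\emptyset$ if and only if $V\cap S\ne\emptyset$. So each $V_i$ also meets only finitely many closures $\Cl S$, and if $\Cl S\cap K\ne\emptyset$ then $\Cl S$ meets some $V_i$, hence $S$ meets $V_i$, hence $S$ is one of finitely many. No frontier axiom required. You do in fact discover exactly this observation inside your step 3 (``every neighborhood of $y$ meets $S$''), but you route it through the frontier condition $T\preceq S$ and a second compactness-style argument about the sets $\{S:T\preceq S\}$, when it could have been applied directly to the cover of $K$. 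Your argument buys nothing extra here and costs you a dependence on the frontier axiom, whereas the paper's proof is valid for any locally finite decomposition into locally closed pieces, with no compatibility condition on frontiers at all. Both arguments are correct; yours is just detoured.
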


\begin{proof}
	By local finiteness, for each $x\in K$ there is an open \nbhd\ $V_x$ of $x$ which intersects only finitely
	many strata.  The open cover $\{V_x\}$ of $K$ has a finite subcover.
\end{proof}

\begin{lemma}\label{compact_exhaustion}
	If $X$ is a  stratified set, there is a sequence of compact subsets $K_i\subset |X|$, $i=1,2,\ldots$
	so that $\bigcup_{i=1}^\infty K_i = |X|$ and for each $i$, $K_i\subset \Int K_{i+1}$.
\end{lemma}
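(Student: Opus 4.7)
The stratified structure plays no role here; what matters is that $|X|$ is locally compact, second countable, and Hausdorff. So my plan is essentially to reproduce the standard fact that such a space is $\sigma$-compact and admits a compact exhaustion with the required nesting.

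First, I would use local compactness together with Hausdorffness to note that every point of $|X|$ has an open neighborhood whose closure is compact. Intersecting a countable base for the topology with the collection of such precompact opens gives a countable collection $\{V_n\}_{n=1}^\infty$ of open sets with compact closure whose union is all of $|X|$. (In the locally compact Hausdorff setting the precompact opens form a basis, and any basis contains a countable subbasis because $|X|$ is second countable.)

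Next I would build the exhaustion inductively. Set $K_1 = \Cl V_1$, which is compact. Suppose $K_i$ has been defined, compact. Since $\{V_n\}$ is an open cover of the compact set $K_i \cup \Cl V_{i+1}$, finitely many of them, say $V_{n_1},\ldots,V_{n_r}$, cover it. Define
\[
K_{i+1} \;=\; \Cl\bigl(V_{n_1}\cup\cdots\cup V_{n_r}\bigr).
\]
This is compact (a finite union of compact closures), and $K_i \cup \Cl V_{i+1} \subset V_{n_1}\cup\cdots\cup V_{n_r} \subset \Int K_{i+1}$, which gives both the nesting $K_i \subset \Int K_{i+1}$ and, via $V_{i+1}\subset K_{i+1}$, the surjectivity $\bigcup_i K_i = |X|$.

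There is no real obstacle; the only mildly subtle point is the initial reduction to a countable basis of precompact opens, which uses the standard fact that in a locally compact Hausdorff space every point has arbitrarily small precompact neighborhoods, so precompact opens form a basis and second countability then extracts a countable subbasis. Everything after that is bookkeeping in the induction.
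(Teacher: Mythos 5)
Your proof is correct and takes essentially the same route as the paper: both extract from second countability and local compactness a countable cover $\{V_n\}$ (the paper's $\{L_i'\}$) of $|X|$ by open sets with compact closure, then build the $K_i$ inductively by taking a finite subcover of the previous $K_i$ and adjoining the closure of the next $V_n$ to guarantee exhaustion. The only cosmetic difference is that you cover $K_i \cup \Cl V_{i+1}$ in one step, whereas the paper covers $K_{k-1}$ and then unions in $\Cl L_k'$ separately; these are the same argument.
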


\begin{proof}
	This only requires second countability and local compactness.
	Let $\{L_i\}$ be a countable basis for the topology of $|X|$.
	By local compactness, every $x\in |X|$ is contained in some $L_i$ whose closure is compact.
	Consequently by taking a subsequence, we have a countable collection of open sets $\{L'_i\}$
	so that each $L_i'$ has compact closure and $\bigcup_{i=1}^\infty L_i' = |X|$.
	Suppose by induction we have a sequence of compact sets $K_1,\ldots,K_{k-1}$ so that for each $i<k-1$,
	$K_i\subset \Int K_{i+1}$ and so that for each $i<k$, $L'_i\subset K_i$.
	We must construct $K_k$. By compactness of $K_{k-1}$ there is a finite collection of the $L_i'$ which cover
	$K_{k-1}$, let $K_k$ be the union of the closures of these $L_i'$ as well as $\Cl L'_k$.
\end{proof}

\section{Thom stratified sets}

A stratified set $X$ is a Thom stratified set if it is equipped with Thom data $\{(U_S,\pi_S,\rho_S)\}$.
Thom data is for every stratum $S$  an open neighborhood $U_S$ of $S$ in $|X|$,
a smooth retraction $\pi_S\colon U_S\to S$ and a smooth distance function $\rho_S\colon U_S\to [0,\infty)$
so that:
\begin{itemize}
	\item For every stratum $S$, $S = \rho_S^{-1}(0)$.
	\item If $T\prec S$ then $\pi_T = \pi_T\pi_S$ and $\rho_T = \rho_T\pi_S$ wherever both sides are defined.
	\item If $T\prec S$ then  $\rho_T\times \pi_T|\colon U_T\cap S\to (0,\infty)\times T$ 
	is a strong submersion.
	\item If $T\prec S$ and $x\in S\cap U_T$ the depth of $x$ in $S$ equals the depth of $\pi_T(x)$
	in $T$.
\end{itemize}

Suppose we have a Thom stratified set and for every stratum $S$ we choose an open \nbhd\ $U'_S$
of $S$ in $U_S$.
Then we obtain another Thom stratified set by restricting each $\pi_S$ and $\rho_S$ to $U'_S$.
We do this often and refer to it as shrinking the $U_S$.
Of course we could avoid this by changing the definition so that $\pi_S$ and $\rho_S$ are germs at $S$,
and this is usually done by other authors.
I have chosen not to do so.

We say that a smooth function $\delta_S\colon S\to (0,\infty)$ is a frontier limit function if the restriction of $\pi_S$
to $\{x\in U_S\mid \rho_S(x)\le \delta_S\pi_S(x)  \}$ is proper,
i.e., for any compact $K\subset S$, $\pi_S^{-1}(K)\cap \{x\in U_S\mid \rho_S(x)\le \delta_S\pi_S(x)  \}$ is compact.
A frontier limit function may not exist, for example $\rho_S$ might approach 0 as you approach the frontier of $U_S$.
However, after shrinking $U_S$ they are guaranteed to exist.

\begin{lemma}\label{frontier}
	For any stratum $S$ of a Thom stratified set, after shrinking $U_S$ there is some frontier limit function.
\end{lemma}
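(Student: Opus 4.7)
My plan is to shrink $U_S$ so that it becomes a locally finite union of relatively compact open sets, and then build $\delta_S$ via a partition of unity with carefully chosen small coefficients. First, using paracompactness and second countability of $S$, I would pick a locally finite open cover $\{O_\alpha\}$ of $S$ with $\overline{O_\alpha}$ compact in $S$. For each $\alpha$, using local compactness and regularity of $|X|$, I would choose nested open sets $V_\alpha \subset \overline{V_\alpha} \subset V_\alpha^+ \subset \overline{V_\alpha^+} \subset U_S$ with $\overline{V_\alpha^+}$ compact, $\overline{O_\alpha} \subset V_\alpha^+$, and --- crucially --- $\overline{V_\alpha^+} \cap S \subset V_\alpha$ (every point of $S$ in the outer set is already in the inner one). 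Using continuity of $\pi_S$, I would further shrink $V_\alpha^+$ so that $\{\pi_S(\overline{V_\alpha^+})\}$ is a locally finite family of compact subsets of $S$. I would then set $U'_S = \bigcup_\alpha V_\alpha^+$, noting that $\overline{V_\alpha} \subset V_\alpha^+ \subset U'_S$.

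The heart of the argument is to produce for each $\alpha$ an $\epsilon_\alpha > 0$ such that $T_\alpha := \pi_S^{-1}(\overline{O_\alpha}) \cap U'_S \cap \{\rho_S \le \epsilon_\alpha\}$ is compact. Local finiteness of $\{\pi_S(\overline{V_\beta^+})\}$ implies that only finitely many $\beta$ satisfy $\pi_S(V_\beta^+) \cap \overline{O_\alpha} \ne \emptyset$, so $T_\alpha$ splits into finitely many pieces, one within each such $V_\beta^+$. On the compact annular set $(\overline{V_\beta^+} \setminus V_\beta) \cap \pi_S^{-1}(\overline{O_\alpha})$ the function $\rho_S$ is strictly positive --- its zero locus is $S$, but $\overline{V_\beta^+} \cap S \subset V_\beta$ --- so $\rho_S$ attains a positive minimum there (or the set is empty). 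Taking $\epsilon_\alpha$ smaller than these finitely many minima forces each piece of $T_\alpha$ into $\overline{V_\beta}$. Thus $T_\alpha$ is closed in $U'_S$ and contained in the compact subset $\bigcup_\beta \overline{V_\beta} \subset U'_S$, hence compact.

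Finally, I would take a smooth partition of unity $\{\phi_\alpha\}$ on $S$ subordinate to $\{O_\alpha\}$, set $c_\alpha = \min\{\epsilon_\gamma : \mathrm{supp}\,\phi_\alpha \cap \overline{O_\gamma} \ne \emptyset\}$ (positive, as a finite minimum by local finiteness), and define $\delta_S = \sum_\alpha \phi_\alpha c_\alpha$; this is smooth and positive. For $y \in \overline{O_\gamma}$, any $\alpha$ with $\phi_\alpha(y) > 0$ satisfies $\mathrm{supp}\,\phi_\alpha \cap \overline{O_\gamma} \ne \emptyset$, so $c_\alpha \le \epsilon_\gamma$, giving $\delta_S(y) \le \epsilon_\gamma$. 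Any compact $K \subset S$ is covered by finitely many $O_{\gamma_i}$, and then $\pi_S^{-1}(K) \cap \{\rho_S \le \delta_S \pi_S\} \subset \bigcup_i T_{\gamma_i}$, which is compact. The main obstacle is step two, and the crucial design choice is $\overline{V_\alpha^+} \cap S \subset V_\alpha$, which is what keeps $\rho_S$ bounded below on the annular regions; without that refinement the compactness argument breaks down because $\rho_S$ could approach zero on sequences escaping $V_\beta$ while projecting into $\overline{O_\alpha}$.
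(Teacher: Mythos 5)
The crucial design choice you identify --- the requirement $\overline{V_\alpha^+}\cap S\subset V_\alpha$ --- is not achievable for a general stratum, and the argument collapses without it. Since $V_\alpha\subset V_\alpha^+$, the chain $V_\alpha^+\cap S\subset\overline{V_\alpha^+}\cap S\subset V_\alpha\subset V_\alpha^+$ forces $V_\alpha^+\cap S=\overline{V_\alpha^+}\cap S$, so $V_\alpha^+\cap S$ is simultaneously open in $S$, closed in $S$, nonempty (it contains $\overline{O_\alpha}$), and compact (it sits inside the compact $\overline{V_\alpha^+}$). Thus it would be a compact union of connected components of $S$, which is impossible whenever $O_\alpha$ meets a noncompact component of $S$ --- exactly the typical situation (e.g.\ a stable manifold $S_p\cong\R^k$ in the Morse application). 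Any open set with compact closure in $U_S$ picks up new points of $S$ upon taking its closure, so there is no clever choice of $V_\alpha^+$ that avoids this; and without the lower bound on $\rho_S$ over the annular sets $(\overline{V_\beta^+}\setminus V_\beta)\cap\pi_S^{-1}(\overline{O_\alpha})$, your compactness argument for $T_\alpha$ fails.

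What the paper does instead is arrange the disjointness from $S$ globally rather than locally. It sets $U'_S=\bigcup_\alpha\pi_S^{-1}(O_\alpha)\cap\Int V_{x_\alpha}$ (where $\{O_\alpha\}$ refines $\{S\cap\Int V_x\}$), so that $S\subset U'_S$ automatically, and then takes $U''_S=U_S\cap\Cl U'_S$. The set $U''_S\setminus U'_S$ is disjoint from $S$ by construction, and one first proves that $\pi_S|_{U''_S}$ is proper, so that $(U''_S\setminus U'_S)\cap\pi_S^{-1}(V_x)$ is compact and has positive $\rho_S$-minimum. Your overall blueprint --- shrink $U_S$, get local positive lower bounds $\epsilon_\alpha$ on $\rho_S$, and assemble $\delta_S$ via a partition of unity with coefficients taken as finite minima --- is in the same spirit as the paper's, but the step that makes the compact ``collar'' where $\rho_S$ is bounded below must be done so that it is disjoint from $S$ \emph{as a whole}; asking each $\overline{V_\alpha^+}$ individually not to acquire new points of $S$ in its closure cannot be done. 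Replacing your condition with $S\subset\bigcup_\alpha V_\alpha$, taking $U'_S=\bigcup_\alpha V_\alpha$ and working with $(U_S\cap\Cl U'_S)\setminus U'_S$, would put you back on the paper's track.
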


\begin{proof}
	Probably the reader can come up with a nicer proof than the following, but here goes.
	For any $x\in S$, choose a compact \nbhd\ $V_x$ of $x$ in $U_S$.
	By paracompactness we may choose a locally finite refinement $\{O_\alpha\}$ of the cover $\{S\cap\Int V_x\}$ of $S$.
	For each $\alpha$ there is an $x_\alpha\in S$ so that $O_\alpha\subset V_{x_\alpha}$.
	Let $U'_S = \bigcup_\alpha \pi_S^{-1}(O_\alpha)\cap \Int V_{x_\alpha}$
	and let $U''_S = U_S\cap \Cl U'_S$.
	We will shrink $U_S$ to $U'_S$.
	
	I claim that $\pi_S|\colon U''_S\to S$ is proper.
	Take any compact $K\subset S$, then by local finiteness $K$ intersects only a finite number of $O_\alpha$
	and hence $\pi_S^{-1}(K)\cap U''_S$ is covered by a finite number of the compact $V_{x_\alpha}$
	and is hence compact.
	
	For each $x\in S$ choose $\delta_x>0$ less than the minimum of $\rho_S$
	on the compact set $(U''_S-U'_S)\cap \pi_S^{-1}(V_x)$.
	Take a smooth partition  of unity $\{\phi_x\}$ for the open cover $\{S\cap \Int V_x\}$ of $S$
	and define $\delta_S(y) = \Sigma_{x\in S}\phi_x(y)\delta_x$.
	Note $\{x\in U''_S\mid \rho_S(x)\le \delta_S\pi_S(x)  \}\subset U'_S$.
\end{proof}

\begin{lemma}\label{frontier_proper}
	Suppose $\delta_S$ is a frontier limit function.
	Then the restriction of $\pi_S\times\rho_S$ to $\{x\in U_S\mid \rho_S(x)< \delta_S\pi_S(x)  \}$
	is a proper map to $\{(x,t)\in S\times (0,\infty) \mid  t < \delta_S(x) \}$
\end{lemma}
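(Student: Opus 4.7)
The plan is to show that for any compact $C$ contained in $\{(x,t)\in S\times (0,\infty)\mid t<\delta_S(x)\}$, the preimage $A:=(\pi_S\times\rho_S)^{-1}(C)$ is compact inside $U_S$. The whole argument rests on the observation that compactness of $C$ inside the open region $\{t<\delta_S(x)\}$ gives a positive buffer between the values of $t$ attained on $C$ and the graph of $\delta_S$, which then lets us apply the frontier limit function property directly.

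First I would let $K$ be the projection of $C$ to $S$, which is compact, and consider the continuous function $(x,t)\mapsto \delta_S(x)-t$ on $C$. Because $C$ is compact and this function is strictly positive on its (larger) open domain, it attains a positive minimum $\epsilon>0$ on $C$. Hence for every $y\in A$ we have $\pi_S(y)\in K$ and $\rho_S(y)\le \delta_S(\pi_S(y))-\epsilon$, so in particular $\rho_S(y)\le \delta_S(\pi_S(y))$.

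Next I would invoke the hypothesis: $\delta_S$ being a frontier limit function means that the restriction of $\pi_S$ to $\{y\in U_S\mid \rho_S(y)\le \delta_S\pi_S(y)\}$ is proper, so $B:=\pi_S^{-1}(K)\cap\{y\in U_S\mid \rho_S(y)\le \delta_S\pi_S(y)\}$ is compact. By the previous paragraph, $A\subset B$.

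Finally, $C$ is closed in $S\times[0,\infty)$ (compact subsets of Hausdorff spaces are closed), so $A=(\pi_S\times\rho_S)^{-1}(C)$ is closed in $U_S$. A closed subset of the compact set $B$ is compact, hence $A$ is compact, which is the required properness. I do not expect a real obstacle here; the only mildly subtle point is insisting that one uses the strict inequality defining the target in order to extract the positive gap $\epsilon$, so that the non-strict inequality in the definition of frontier limit function can be applied.
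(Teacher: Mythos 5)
Your proof is correct and follows exactly the paper's argument: project $C$ to $S$, apply the properness from the frontier limit definition to get a compact superset, and observe that $A$ is closed in it. One small remark: the positive gap $\epsilon$ that you extract is not actually needed anywhere, since for $y\in A$ the membership $(\pi_S(y),\rho_S(y))\in C$ already gives $\rho_S(y)<\delta_S\pi_S(y)$, which trivially implies the non-strict inequality $\rho_S(y)\le\delta_S\pi_S(y)$ used in the definition of frontier limit function; the compactness of $C$ and strictness of the inequality are not the reason the argument works, so the "mildly subtle point" you flag is a red herring.
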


\begin{proof}
	Let $K$ be any compact subset of $W = \{(x,t)\in S\times (0,\infty) \mid  t < \delta_S(x) \}$
	and let $K'$ be the projection of $K$ to $S$.
	By definition we know $K'' = \pi_S^{-1}(K')\cap \{(x,t)\in S\times (0,\infty) \mid  t \le \delta_S(x) \}$
	is compact.  But $(\pi_S\times\rho_S)^{-1}(K)\cap \{x\in U_S\mid \rho_S(x)< \delta_S\pi_S(x)  \}$ is a closed subset of $K''$ and hence compact.
\end{proof}

The following is really just a version of part of Lemma 17 of \cite{Tico}
which was stated in the Whitney stratified context.
We rehash the proof given there which simplifies a bit since we are not simultaneously constructing the $\pi_S$.

\begin{lemma}\label{goodthom}
	Let $X$ be a Thom stratified set 
	with Thom data $\{(U_S,\pi_S,\rho_S) \}$.
	Then after perhaps shrinking the $U_S$ and restricting the $\rho_S$ and $\pi_S$
	we may suppose that:
	\begin{enumerate}
		\item If $T\prec S$ then $U_S\cap U_T= \pi_S^{-1}(S\cap U_T)$.
		\item If $T\prec S$ then $\pi_T\pi_S = \pi_T$ and
		$\rho_T\pi_S = \rho_T$
		on $U_S\cap U_T$.
		\item If $U_S\cap U_T$ is nonempty, then either $S\prec T$,
		$T\prec S$, or $S=T$.
		\item If $T_1\prec T_2\prec\cdots\prec T_\ell\prec S$  then
		$$(\rho_{T_1},\rho_{T_2},\ldots,\rho_{T_\ell},\pi_{T_1})
		\colon S\cap U_{T_1}\cap\cdots\cap U_{T_\ell} \to
		(0,\infty)^\ell\times T_1$$
		is a strong submersion.
		\item $\pi_S\times \rho_S\colon U_S\to W_S$ is a proper map onto $W_S$, where
		$W_S=\{(x,t)\in S\times (0,\infty) \mid  t < \gamma_S(x) \}$
		for some smooth $\gamma_S\colon S\to (0,\infty)$.
		\item Any compact subset of $|X|$ intersects only a finite number of the $U_S$.
	\end{enumerate}
\end{lemma}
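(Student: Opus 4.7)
The plan is to shrink each $U_S$ inductively, working from the strata of lowest dimension upward and replacing $U_S$ by a set of the form $\{x \in U_S : \rho_S(x) < \gamma_S(\pi_S(x))\}$ for a smooth $\gamma_S : S \to (0, \infty)$. The function $\gamma_S$ will be chosen small enough that the six conditions hold, exploiting the fact that only finitely many obstructions appear at each inductive step thanks to local finiteness.

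Condition (5) is built into the construction: Lemma \ref{frontier} produces a frontier limit function $\delta_S$ after a first shrinking, and by Lemma \ref{frontier_proper} taking $\gamma_S \le \delta_S$ makes $\pi_S \times \rho_S$ proper onto $W_S$. For (6), fix a compact exhaustion $\{K_i\}$ of $|X|$ by Lemma \ref{compact_exhaustion}; each $K_i$ meets only finitely many strata $S$ by Lemma \ref{compact-finite}, and by requiring $\gamma_S$ to decay fast enough off the relevant $K_i$'s one can ensure that each $K_i$ intersects only finitely many of the new $U_S$.

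Conditions (1) and (2) form the crux. For $T \prec S$ the original Thom axiom already gives $\pi_T \pi_S = \pi_T$ and $\rho_T \pi_S = \rho_T$ wherever both sides are defined; since the redefined $U_T$ is exactly the sublevel set $\{\rho_T < \gamma_T \pi_T\}$, the test ``$\pi_S(x) \in U_T$'' is a scalar inequality on $\pi_S(x)$ that by the axiom matches the same inequality on $x$. Membership in $U_T$ thus becomes $\pi_S$-saturated on $U_S$, yielding both directions of (1) together with the compatibility (2). A further intersection of $U_S$ with $(|X| \setminus \overline{U_T}) \cup \pi_S^{-1}(U_T)$ for each $T \prec S$ meeting $U_S$ disposes of any leftover points; by (6) this is a finite operation at each stratum.

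For (3), unrelated strata $S$ and $T$ satisfy $S \cap \overline T = T \cap \overline S = \emptyset$, so a further shrinking using paracompactness and the local finiteness from (6) disjoins incomparable tubes. Property (4) then follows from (1), (2) and the composition lemma for strong submersions proved above: on $S \cap U_{T_1} \cap \cdots \cap U_{T_\ell}$, condition (1) applied to each pair $(T_\ell, T_i)$ with $i < \ell$ places $\pi_{T_\ell}(x)$ in $U_{T_i}$, and the identities $\rho_{T_i} = \rho_{T_i} \pi_{T_\ell}$, $\pi_{T_1} = \pi_{T_1} \pi_{T_\ell}$ factor the map as $(\rho_{T_\ell}, \pi_{T_\ell}) \colon S \cap U_{T_\ell} \to (0, \infty) \times T_\ell$ followed by $(\rho_{T_1}, \ldots, \rho_{T_{\ell-1}}, \pi_{T_1}) \times \mathrm{id}$ on $(T_\ell \cap U_{T_1} \cap \cdots \cap U_{T_{\ell-1}}) \times (0, \infty)$, which is a strong submersion by induction on $\ell$. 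The main obstacle is the interdependence of the shrinkings required for (1)--(3): each fix with respect to one pair could damage another, so the induction must exploit the dimensional ordering and local finiteness (6) to bound the number of simultaneous constraints at each stratum.
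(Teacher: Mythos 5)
The central difficulty in this lemma is condition (1), and your argument for it has a genuine gap. You correctly observe that for $T\prec S$ the Thom identities $\pi_T\pi_S=\pi_T$ and $\rho_T\pi_S=\rho_T$ turn the membership test $\rho_T<\gamma_T\pi_T$ into a ``$\pi_S$-saturated'' condition---\emph{but only at points where $\rho_T$ and $\pi_T$ are defined in the first place}, i.e.\ only on $U_S\cap U_T$. That leaves exactly the hard case untouched: a point $x\in U_S'$ with $\pi_S(x)\in U_T'$ but $x\notin U_T$. For such an $x$ the scalar inequality cannot even be written down, so nothing forces $x$ into $U_T'$. Your proposed further intersection of $U_S$ with $(|X|\setminus\overline{U_T})\cup\pi_S^{-1}(U_T)$ removes a point precisely when $x\in\overline{U_T}$ \emph{and} $\pi_S(x)\notin U_T$; in the problematic case $\pi_S(x)\in U_T$, so $x$ survives the intersection regardless, and condition (1) still fails. (This intersection also risks deleting points of $S$ itself from $U_S$, since $S\cap\overline{U_T}$ may strictly contain $S\cap U_T$.)

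What is actually needed---and is the heart of the paper's proof---is a flow argument combined with properness. After choosing $\gamma_S$ via Lemma~\ref{frontier}, one uses the strong-submersion hypothesis on $\rho_S\times\pi_S$ to build a flow on the stratum through $x$ that fixes $\pi_S$ and moves $\rho_S$ linearly, connecting $\pi_S(x)$ to $x$. Along this flow, wherever one is inside $U_T'$, the quantities $\rho_T$ and $\pi_T$ are constant by the control identities; and the properness of $\rho_T\times\pi_T$ on $U_T'$ (which is precisely what Lemma~\ref{frontier_proper} supplies, and is why the $\gamma$-shrinking is done for lower strata first) forces the flow line to remain in $U_T'$ all the way to $x$. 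Without this continuation-plus-properness step there is no way to upgrade ``$\pi_S(x)\in U_T'$'' to ``$x\in U_T'$.'' The remaining parts of your proposal (the treatment of (3), (4), (5), and the compact exhaustion for (6)) are essentially the same strategy as the paper, though note the paper achieves (6) by subtracting a compact set from $U_T$ rather than by decaying $\gamma_T$, which is cleaner since $\gamma_T$ lives on $T$ and cannot directly control how far $U_T$ extends into compacta that miss $T$.
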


\begin{proof}
	By Lemma \ref{compact_exhaustion} there is a sequence of compact subsets
	$K_1, K_2,\ldots$  so that $K_i\subset \Int K_{i+1}$ for all $i$
	and $\bigcup_{i=1}^\infty  K_i = |X|$.
	For each stratum $T$ which does not intersect $K_1$, let $j$ be the largest index so that $T\cap K_j$ is empty.
	Replace $U_T$ by the smaller open \nbhd\ $U_T-K_j$.
	Now any compact set $K$ is contained in some $K_i$. If $K$ intersects some $U_S$ then $K_i$ intersects $U_S$
	so $K_i$ intersects $S$, but by local finiteness $K_i$ only intersects a finite number of strata
	so the sixth condition holds.

	The third condition is easily obtained by shrinking the $U_S$.
	We suppose by induction on $i$ that if $S$ and $T$ are strata which intersect $K_i$ and neither $T\prec S$
	nor $S\preceq T$ then $U_S\cap U_T$ is empty.
	The inductive step shrinks the $U_S$ leaving $U_S\cap K_i$ fixed.
	In the end, each $U_S$ will still be open even though it may have been shrunk infinitely often.
	
	Note that $U_S\cap \Cl S = S$ since $\pi_S$ is a retraction.
	For each stratum $S$ we may pick a \nbhd\ $U''_S$ of $S$ in $U_S$ so that $\Cl U''_S - U_S = \Cl S -S$
	since $S$ and $|X|-(U_S \cup \Cl S)$ are disjoint closed subsets of the normal space $|X| - (\Cl S - S)$.
	We suppose by induction on $k$ that for all strata $S$ of dimension $< k$ we have chosen a 
	frontier limit function $\gamma_S$ so that if  $U'_S = \{x\in U_S\mid \rho_S(x)< \gamma_S\pi_S(x)  \}$ then
	$U'_S\subset U''_S$ and 
	for any $T\prec S$ we know $\pi_S(U'_T\cap U'_S)\subset U_T$ 
	and consequently $\pi_T\pi_S=\pi_T$ and $\rho_T\pi_S=\rho_T$ on $U'_S\cap U'_T$.
	For the inductive step, let $S$ be a stratum of dimension $k$.
	We will need to find an appropriate $\gamma_S$.
	For each $x\in S$ pick a \nbhd\ $W_x $ of $x$ in $U''_S$ with compact closure.
	Then $W_x$ intersects only finitely many $U_T$.
	Also, if $T\prec S$ and $W_x$ intersects $\Cl U'_T$ then $W_x$ intersects $\Cl U''_T$ so $W_x$ intersects $U_T$
	and thus this occurs for only finitely many $T$.	
	Let $$W'_x=W_x\cap \bigcap_{  x\in U_T}\pi_S^{-1}(S\cap U_T)-\bigcup_{ x\not\in \Cl U'_T, T\prec S} \Cl U'_T.$$
	By Lemma \ref{frontier} we may choose $\gamma_S$ so that $U'_S\subset \bigcup_{x\in S} W'_x$.
	Note that $\pi_S(U'_T\cap U'_S)\subset U_T$ for all $T\prec S$.
	(If $y\in U'_T\cap U'_S$ then $y\in W'_x$ for some $x$.
	If $x\in U_T$ then $y\in \pi_S^{-1}(S\cap U_T)$ so $\pi_S(y)\in U_T$.
	If $x\not\in U_T$ then $x\not\in \Cl U'_T$ so $y\not\in \Cl U'_T$, contradiction.)
	So by induction we may choose $\gamma_S$ for all strata.
	
	At this point, I claim we may shrink all $U_S$ to $U'_S$ and the conclusions of this Lemma hold.
	Let us see why.
	For the first conclusion we must show that if $T\prec S$ then $U'_S\cap U'_T= \pi_S|_{U'_S}^{-1}(S\cap U'_T)$,
	i.e., $U'_S\cap U'_T= \pi_S^{-1}(S\cap U'_T)\cap U'_S$.
	
	Suppose $x\in \pi_S^{-1}(S\cap U'_T)\cap U'_S-U'_T$ for some strata $T\prec S$.
	Let $P$ be the stratum containing $x$.
	Since $\rho_S\times \pi_S$ submerses $P\cap U'_S$ we may integrate an appropriate vector field on $P\cap U_S'$ to
	obtain a flow $\phi_t(y)$ on $P\cap U'_S$ so that $\pi_S\phi_t(y)=\pi_S(y)$ and $\rho_S\phi_t(y) = \rho_S(y)+t$.
	For any $y$, this flow is defined for all $t\in (-\rho_S(y),\gamma_S\pi_S(y)-\rho_S(y))$
	and $\lim_{t\to -\rho_S(y)}\phi_t(y) = \pi_S(y)$.
	Define a continuous curve $\beta\colon [0,\rho_S(x)]\to U'_S$ by $\beta(t)=\phi_{t-\rho_S(x)}(x)$ for $t>0$ 
	and $\beta(0)=\pi_S(x)$.
	Let $t_0 = \inf\{t\mid \beta(t)\not\in U'_T \}$.
	Since $U'_T$ is open we know $t_0 > 0$.
	For $t<t_0$ we know 
	$\pi_T\beta(t) = \pi_T\pi_S\beta(t) = \pi_T\pi_S(x)$
	and $\rho_T\beta(t)=\rho_T\pi_S\beta(t)=\rho_T\pi_S(x)$ are constants.
	But by properness of $\rho_T\times \pi_T$ on $U'_T$ we know $\{y\in U'_T\mid \pi_T(y)  = \pi_T\pi_S(x)
	{\rm \ and\ } \rho_T(y) = \rho_T\pi_S(x)\}$ is compact so $\beta(t_0)\in U'_T$
	so $\beta(t)\in U'_T$ for $t$ slightly larger than $t_0$, a 
	contradiction.  So we know $\pi_S^{-1}(S\cap U'_T)\cap U'_S\subset U'_S\cap U'_T$.
	
	Now suppose $x\in U'_S\cap U'_T$ but $\pi_S(x)\not\in U'_T$ for $T\prec S$.
	As above we may find a continuous $\beta\colon [0,\rho_S(x)]\to U'_S$ so that $\pi_S\beta(t)=\pi_S(x)$,
	$\rho_S\beta(t)=t$, and $\beta(\rho_S(x))=x$.
	Let $t_0 = \sup \{ t \mid \beta(t)\not\in U'_T  \}$.
	For $t>t_0$ we know as above that $\pi_T\beta(t)$ and $\rho_T\beta(t)$ are constant so $\beta(t_0)\in U'_T$,
	a contradiction. So $U'_S\cap U'_T= \pi_S^{-1}(S\cap U'_T)\cap U'_S$.

	The second condition is immediate and we already shrunk to satisfy the third condition.
	For the fourth condition
	we must show that if $T_1\prec T_2\prec\cdots\prec T_\ell\prec S$  then
	$(\rho_1,\rho_2,\ldots,\rho_\ell,\pi_{T_1})
	\colon S\cap U'_{T_1}\cap\cdots\cap U'_{T_\ell} \to
	(0,\infty)^\ell\times T_1$
	is a strong submersion.
	Note this map is the composition of the strong submersions
	$\rho_{T_\ell}\times \pi_{T_\ell}\colon S\cap U'_{T_1}\cap\cdots\cap U'_{T_{\ell}} \to 	(0,\infty)\times T_\ell$
	and $id\times(\rho_{T_1},\rho_{T_2},\ldots,\rho_{T_{\ell-1}},\pi_{T_1})$.
	The fifth condition is a consequence of Lemma \ref{frontier_proper} and we already shrunk to satisfy the sixth.	
\end{proof}

We say $\{ (U_S,\pi_S,\rho_S,\gamma_S)  \}$ is enhanced Thom data for $X$ if it satisfies the conclusions of Lemma \ref{goodthom}.

It is often convenient to reparameterize the distance functions $\rho_S$.

\begin{lemma}\label{reparam}
	Suppose  $\{ (U_S,\pi_S,\rho_S,\gamma_S)  \}$ is enhanced Thom data for a Thom stratified set $X$.
	For each stratum $S$, suppose we pick a smooth $\gamma'_S\colon S\to (0,\infty)$
	and a smooth parameterized family of diffeomorphisms $\kappa_{S,y}\colon [0,\gamma_S(y))\to [0,\gamma'_S(y))$
	for $y\in S$.  Define $\rho'_S(x) = \kappa_{S,\pi_S(x)}\rho_S(x)$.
	Then $\{(U_S,\pi_S,\rho'_S,\gamma'_S)\}$ is enhanced Thom data for $X$.
	
	In particular, if we set $\gamma'_S = 1$ everywhere and let $\kappa_{S,y}(t) = t/\gamma_S(y)$ we see that
	any Thom stratified set has enhanced Thom data with 
	all $\gamma_S$ the constant 1.
\end{lemma}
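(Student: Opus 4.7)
The plan is to verify each of the six conclusions of Lemma~\ref{goodthom} for the reparametrized data $\{(U_S, \pi_S, \rho'_S, \gamma'_S)\}$. Since neither $U_S$ nor $\pi_S$ changes, conclusions (1), (3), and (6) are automatic. For conclusion (2), the identity $\rho'_T \pi_S = \rho'_T$ follows from $\rho'_T(\pi_S(x)) = \kappa_{T,\pi_T\pi_S(x)}(\rho_T(\pi_S(x))) = \kappa_{T,\pi_T(x)}(\rho_T(x)) = \rho'_T(x)$, using the original identities $\pi_T\pi_S = \pi_T$ and $\rho_T\pi_S = \rho_T$. For conclusion (5), I would observe that the map $\Psi_S(y,t) = (y,\kappa_{S,y}(t))$ is a diffeomorphism $W_S \to W'_S$ and that $\pi_S \times \rho'_S = \Psi_S \circ (\pi_S \times \rho_S)$, so properness is preserved.

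The main obstacle is conclusion (4). Given a chain $T_1 \prec T_2 \prec \cdots \prec T_\ell \prec S$, the difficulty is that $\rho'_{T_i}(x) = \kappa_{T_i,\pi_{T_i}(x)}(\rho_{T_i}(x))$ depends on $\pi_{T_i}(x) \in T_i$, which is not directly recoverable from the image of the unreparametrized map $(\rho_{T_1}, \ldots, \rho_{T_\ell}, \pi_{T_1})$. Thus the reparametrized map $F = (\rho'_{T_1}, \ldots, \rho'_{T_\ell}, \pi_{T_1})$ does not factor as a target diffeomorphism composed with the original strong submersion, and a naive ``composition with a diffeomorphism'' argument fails.

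I would get around this by induction on $\ell$, mirroring the composition argument used to verify conclusion (4) in the proof of Lemma~\ref{goodthom}. The base case $\ell = 1$ follows by writing $(\rho'_{T_1}, \pi_{T_1}) = \Psi_{T_1} \circ (\rho_{T_1}, \pi_{T_1})$, a diffeomorphism composed with a strong submersion. For the inductive step, apply the induction hypothesis with $T_\ell$ in the role of $S$ to get that $(\rho'_{T_1}, \ldots, \rho'_{T_{\ell-1}}, \pi_{T_1})$ is a strong submersion on $T_\ell \cap U_{T_1} \cap \cdots \cap U_{T_{\ell-1}}$. Combined (via $\mathrm{id} \times \cdot$) with the base case applied to $T_\ell \prec S$, one obtains the strong submersion sending $x$ to $(\rho'_{T_\ell}(x), \rho'_{T_1}(\pi_{T_\ell}(x)), \ldots, \rho'_{T_{\ell-1}}(\pi_{T_\ell}(x)), \pi_{T_1}(\pi_{T_\ell}(x)))$. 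The key compatibility $\rho'_{T_i}(\pi_{T_\ell}(x)) = \rho'_{T_i}(x)$ for $i < \ell$ reduces, as in the verification of conclusion (2), to the original identities $\rho_{T_i}\pi_{T_\ell} = \rho_{T_i}$ and $\pi_{T_i}\pi_{T_\ell} = \pi_{T_i}$. After a permutation of the $(0,\infty)^\ell$ factors in the target (a diffeomorphism), this composition equals $F$, completing the induction.
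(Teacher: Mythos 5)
Your proof is correct and follows essentially the same route as the paper: conditions (1), (3), (6) are immediate, condition (2) is the same two-line computation, condition (5) is composition with the diffeomorphism $\Psi_S$ (the paper calls it $\kappa_S$), and condition (4) is established by induction on $\ell$, with $\ell=1$ handled by composing with $\Psi_{T_1}$. The paper compresses the inductive step for condition (4) into ``follows by induction as in the proof of Lemma~\ref{goodthom}''; you have usefully unpacked it, including the observation that the compatibility $\rho'_{T_i}\pi_{T_\ell}=\rho'_{T_i}$ (a consequence of the already-verified condition (2)) is what makes the factorization through $\rho'_{T_\ell}\times\pi_{T_\ell}$ land on the right map.
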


\begin{proof}
	Conditions 1, 3, and 6 in Lemma \ref{goodthom} are immediate since $U_S$ is unchanged.
	By definition of smooth parameterization, the map $$\kappa_S\colon 
	\{(x,t)\in S\times (0,\infty) \mid  t < \gamma_S(x) \}\to \{(x,t)\in S\times (0,\infty) \mid  t < \gamma'_S(x) \}$$
	given by $\kappa_S(x,t)=(x,\kappa_{S,x}(t))$ is smooth and by the inverse function theorem
	is a diffeomorphism, so condition 5 holds.
	Condition 2 follows since $$\rho'_T\pi_S(x) = \kappa_{T,\pi_T(\pi_S(x))}\rho_T(\pi_S(x)) =  
	\kappa_{T,\pi_T(x)}\rho_T(x) = \rho'_T(x).$$
	Condition 4 with $\ell=1$ follows since $(\pi_{T_1},\rho'_{T_1}) = \kappa_{T_1}(\pi_{T_1},\rho_{T_1})$
	and is hence a strong submersion.  
	Condition 4 for all $\ell$ follows by induction as in the proof of Lemma \ref{goodthom}.
\end{proof}

\begin{lemma}\label{proper_r}
	Suppose $\{ (U_S,\pi_S,\rho_S,\gamma_S)  \}$ is enhanced Thom data for $X$.
	Suppose also that we have for each stratum $S$ of $X$ a smooth $\delta_S\colon S\to \R$ so that $\gamma_S(x)>\delta_S(x)>0$
	for all $x\in S$.
	Then there is a smooth proper map $r\colon |X|\to \R$ so that $r\pi_S(x)=r(x)$ whenever $\rho_S(x)\le \delta_S\pi_S(x)$.
\end{lemma}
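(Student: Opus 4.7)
The plan is to construct the restrictions $r_S := r|_S \colon S \to \R$ by induction on strata ordered so that $T \prec S$ implies $T$ precedes $S$ (say, by dimension), and then set $r(x) := r_{S_x}(x)$ for $x$ in stratum $S_x$. Every $x$ lies in $V_{S_x} := \{y \in U_{S_x} : \rho_{S_x}(y) \le \delta_{S_x}\pi_{S_x}(y)\}$ since $\rho_{S_x}(x) = 0$, and the required identity $r\pi_S = r$ on $V_S$ is equivalent to the collection of conditions $r_S(x) = r_T(\pi_T(x))$ for $T \prec S$ and $x \in V_T \cap S$, so a compatible smooth family $\{r_S\}$ automatically yields a continuous $r$ satisfying the conclusion.

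At stratum $S$, assuming $r_T$ has been built for all $T \prec S$ satisfying the analogous compatibility, put $A_S := \bigcup_{T \prec S}(V_T \cap S)$. This is closed in $S$ since each $V_T \cap S$ is closed in $S$ (because $\pi_T|_{V_T \cap S}$ is proper by the frontier limit property of $\delta_T$) and the union is locally finite by Lemma~\ref{compact-finite} combined with Lemma~\ref{goodthom}(6). Define $r_S^0 \colon A_S \to \R$ by $r_S^0(x) := r_T(\pi_T(x))$ for $x \in V_T \cap S$. Well-definedness: at $x \in V_{T_1}\cap V_{T_2}\cap S$, Lemma~\ref{goodthom}(3) forces $T_1, T_2$ to be $\prec$-comparable; assuming $T_1\prec T_2$, Lemma~\ref{goodthom}(2) and the inductive identity on $r_{T_2}$ at $\pi_{T_2}(x)\in V_{T_1}\cap T_2$ together give $r_{T_2}(\pi_{T_2}(x))=r_{T_1}(\pi_{T_1}(x))$.

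For a smooth extension of $r_S^0$ to $S$, I cover $A_S$ by open $S$-sets on each of which $r_S^0$ has a smooth extension, and then patch with a partition of unity. Concretely, at $x\in A_S$ pick $T^*$ maximal in $\{T\prec S : x\in V_T\}$ (a chain by Lemma~\ref{goodthom}(3)); in a sufficiently small $S$-neighborhood $W\subset S\cap U_{T^*}$ of $x$, the only $T\prec S$ with $V_T\cap W\ne\emptyset$ satisfy $T\preceq T^*$, and for each such $T$ the inductive identity on $r_{T^*}$ at $\pi_{T^*}(y)\in V_T\cap T^*$ yields $r_{T^*}(\pi_{T^*}(y))=r_T(\pi_T(y))$; thus $r_{T^*}\circ\pi_{T^*}$ is a smooth extension on $W$. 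Assembling with a subordinate smooth partition of unity and an arbitrary smooth extension off $A_S$ produces a smooth $\tilde r_S\colon S\to\R$ with $\tilde r_S|_{A_S}=r_S^0$. To force properness, I fix once and for all a smooth proper $q\colon|X|\to\R$ (built via a partition of unity subordinate to the compact exhaustion of Lemma~\ref{compact_exhaustion}), choose a smooth cutoff $\eta_S\colon S\to[0,1]$ vanishing on a neighborhood of $A_S$ and equal to $1$ outside a collar $N_S\supset A_S$, and set $r_S:=\tilde r_S+\eta_S(q|_S-\tilde r_S+1)$. Then $r_S|_{A_S}=r_S^0$ and $r_S\ge q|_S+1$ on $S\setminus N_S$.

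The main obstacle will be verifying the resulting global $r$ is proper, and the argument is inductive: I show by induction on the $\prec$-depth of $T$ that $r^{-1}([-M,M])\cap T$ has compact closure in $|X|$ for every $M>0$, by splitting a point into the $T\setminus N_T$ case (where $q=r_T-1\in[-M-1,M-1]$, so the point lies in the compact $q^{-1}([-M-1,M-1])$) and the $N_T$ case (where the point lies in $V_{T'}$ for some $T'\prec T$ with $\pi_{T'}$-image in the inductively-precompact $r^{-1}([-M,M])\cap T'$, which lifts to a compact set in $V_{T'}$ by properness of $\pi_{T'}|_{V_{T'}}$). Local finiteness from Lemma~\ref{compact-finite} then shows $r^{-1}([-M,M])$ is covered by finitely many such pieces, hence compact. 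The delicate point is ensuring the collars $N_S$ are chosen narrowly enough (shrinking $U_S$ en route if needed, so $V_T$ is well separated from the boundary behavior of $U_T$) that the chain argument in the local extension step and this inductive compactness argument both close up cleanly.
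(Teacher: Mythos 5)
Your approach is genuinely different from the paper's: you build $r$ stratum-by-stratum by induction on the partial order, pinning $r$ to a fixed proper background $q$ away from the collars of lower strata, whereas the paper works globally, producing a sequence $r_i\colon|X|\to[0,1]$ from compactly-supported bump functions $q_x$ (each supported in a \emph{compact} set of the form $\{\pi_S\in K,\ \rho_S\le 4/5\}$) and a compact exhaustion, and sums them. That compactness of supports is what makes the paper's continuity and properness verifications essentially automatic; your construction trades that for an inductive scheme whose properness must be argued separately, and this is where the gaps are.

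There are three related problems in the properness step. First, the dichotomy ``$y\in T\setminus N_T$ vs.\ $y\in N_T$'' is not exhaustive in the useful sense: on $N_T\setminus A_T$ (the bulk of the collar, where $\eta_T\in(0,1)$ or $\eta_T=0$ but $y$ is not in any $V_{T'}$) you have $r_T(y)=(1-\eta_T(y))\tilde r_T(y)+\eta_T(y)(q(y)+1)$, and nothing constrains $\tilde r_T$ there; the claim that such $y$ lie in some $V_{T'}$ with controllable $\pi_{T'}$-image is simply false for points of $N_T\setminus A_T$. To close this you would need the collar $N_T$ to lie inside $\bigcup_{T'\prec T}\{\rho_{T'}\le\delta'_{T'}\pi_{T'}\}$ for some $\delta_{T'}<\delta'_{T'}<\gamma_{T'}$, and you would need the inductive hypothesis to control $\tilde r_T$ (not just $r_T$) on that enlarged region — which in turn means the induction should be carried out with the primed data, not merely with the $V_{T'}$'s. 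Second, the justification that each $V_T\cap S$ is closed in $S$ ``because $\pi_T|_{V_T\cap S}$ is proper'' is a non sequitur: properness of a map does not make its domain closed in an ambient space. Closedness of $V_T$ does appear to be true here, but it requires an actual argument using the compatibility $U_S\cap U_T=\pi_S^{-1}(S\cap U_T)$ and properness of $\pi_{T'}\times\rho_{T'}$ for the frontier strata $T'\prec T$; as written this is an unproved assertion that your local-extension step leans on. Third, the final line ``Local finiteness from Lemma~\ref{compact-finite} then shows $r^{-1}([-M,M])$ is covered by finitely many such pieces'' is circular: Lemma~\ref{compact-finite} gives finiteness only once you already have a compact set in hand, which is exactly what you are trying to establish. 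You would instead want to observe that outside all collars $r=q+1$, so the ``outside'' part of $r^{-1}([-M,M])$ sits inside the compact $q^{-1}([-M-1,M-1])$, and then run the induction to absorb the collar contributions into finitely many compact pieces. You flag the collar issue yourself as ``the delicate point,'' but the proof as written does not resolve it; as it stands the properness claim is not established.
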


\begin{proof}
	After reparameterizing using Lemma \ref{reparam} we may as well suppose that $\gamma_S = 1$ and $\delta_S = 1/2$ everywhere.
	For example use the reparameterization 
	$$\kappa_{S,y}(t)  = (\gamma_S(y)-\delta_S(y))t/((\gamma_S(y)-2\delta_S(y))t + \delta_S(y)\gamma_S(y)).$$
	
	We will find below a sequence of smooth functions $r_i\colon |X|\to [0,1]$ so that:
	\begin{itemize}
		\item For each $i$, $r_i^{-1}(0)$ is compact.
		\item If $j>i$ then $r_i^{-1}([0,1))\subset r_j^{-1}(0)$.
		\item For any $x\in |X|$ there is an $i$ so that $r_i(x)<1$.
		\item For each $i$ and stratum $S$, if $x\in \rho_S^{-1}([0,1/2])$ then $r_i(x)=r_i\pi_S(x)$.
	\end{itemize}
	Given these $r_i$ we just let $r = \Sigma r_i$.  Note $r$ is well defined, for any $x\in |X|$, pick $i$ so $r_i(x)<1$.
	Then $r_j(y)=0$ for all $j>i$ and $y$ near $x$, so $r(y) = \Sigma_{j=1}^i r_j(y)$.  In fact if $r_i(x)\in (0,1)$
	then $r(y) = i-1+r_i(y)$ for $y$ near $x$.
	To see $r$ is proper, note that $r^{-1}([0,n]) \subset r_{n+1}^{-1}(0)$ which is compact.
	
	So let us find the $r_i$.
	For each $x\in |X|$ pick a smooth function $q_x\colon |X|\to [0,1]$ as follows.
	Let $S$ be the lowest dimensional stratum so that $x\in U_S$ and $\rho_S(x)\le 1/2$.
	Choose a smooth function $p\colon S\to [0,1]$ with compact support so that:
	\begin{itemize}
		\item For all $y$ in some \nbhd\ of $\pi_S(x)$ in $S$, $p(y)=1$.
		\item If $T\prec S$ and $y\in S\cap \rho_T^{-1}([0,1/2])$ then $p(y)=0$.
	\end{itemize}
	Now choose some smooth $\alpha\colon [0,1]\to [0,1]$ so that 
	$\alpha(t)=1$ for $t< 3/5$ and $\alpha(t)=0$ for $t>4/5$.
	We then define $q_x(y)= p\pi_S(y) \alpha\rho_S(y)$ for $y\in U_S$ and $q_x(y)=0$ for $y\not\in U_S$.
	Note that $q_x$ has compact support and is 1 on a \nbhd\ of $x$.
	Also for any $T$, if $y\in \rho_T^{-1}([0,1/2])$ then $q_x\pi_T(y)=q_x(y)$.
	This is trivial if $y\not\in U_S$ or $T\prec S$ since both sides are 0.
	If $y\in U_S$ and $T=S$ then
	$q_x\pi_S(y) = p\pi_S(y) = q_x(y)$
	and if $y\in U_S$ and $S\prec T$ then 
	$$q_x\pi_T(y) = p\pi_S\pi_T(y)\alpha\rho_S\pi_T(y) = p\pi_S(y)\alpha\rho_S(y) = q_x(y).$$
	Let $V_x$ denote the interior of $q_x^{-1}(1)$ which is an open \nbhd\ of $x$.
	
	By Lemma \ref{compact_exhaustion} there is  a countable sequence of compact subsets $K_1\subset K_2 \subset\cdots$
	so that $\bigcup_{i=1}^\infty K_i = |X|$.
	Suppose now we have constructed $r_1,\ldots,r_{i-1}$ so that:
	\begin{itemize}
		\item For each $j<i$, $1-r_j$ has compact support.
		\item If $k<j<i$ then $r_k^{-1}([0,1))\subset r_j^{-1}(0)$.
		\item For each $j < i$, $K_j\subset r_j^{-1}(0)$.
		\item For each $j<i$ and stratum $S$, if $x\in \rho_S^{-1}([0,1/2])$ then $r_j(x)=r_j\pi_S(x)$.
	\end{itemize}
	By compactness of $K_i\cup \Cl r_{i-1}^{-1}([0,1))$ we may choose a finite number of points $z_1,\ldots, z_k$
	so that$K_i \cup \Cl r_{i-1}^{-1}([0,1))\subset \bigcup_{j=1}^k V_{z_j}$.
	We define $r_i(y)=\Pi_{j=1}^k (1-q_{z_j}(y))$.
	Note the support of $1-r_i$ is contained in the union of the supports of the $q_{z_j}$ and is hence compact.
	Note that $r_i$ is 0 on any $V_{z_j}$ and hence $K_i\cup r_{i-1}^{-1}([0,1))\subset r_i^{-1}(0)$ and
	consequently $r_j^{-1}([0,1))\subset r_i^{-1}(0)$ for all $j<i$.
	So we can continue the construction for all $i$.
	
	It only remains to show the $r_i$ have the required properties.
	We know $r_i^{-1}(0)$ is compact since it is a closed subset of the support of $1-r_{i}$ which is compact.
	Any $x\in |X|$ is contained in some $K_i$ and so $r_i(x)=0$.
\end{proof}

\section{Untico Resolution Towers for Thom stratified sets}

Suppose we have a tico $A$ in a manifold $M$, i.e., $A$ is an immersed codimension one submanifold of $M$
in general position with itself. We could split $M$ along $A$ and obtain a manifold with convex corners.
With this analogy in mind we define an untico in a manifold $Z$ with convex corners to be the closure of a union of
connected components of $\partial_1 Z$.

An untico resolution tower is analogous to a resolution tower as defined in \cite{AK}, except that
instead of ticos in manifolds we have unticos in manifolds with convex corners.
So we have a partially ordered index set which we may as well take to be the strata of a  stratified set $X$
with the partial order $\prec$ and for each index $S$ a manifold $V_S$ with convex corners on the boundary and for each 
$T\prec S$ an untico $V_{TS}\subset V_S$ and a proper map $p_{TS}\colon V_{TS} \to V_T$ so that:
\begin{enumerate}
	\item If $V_{TS}\cap V_{PS}$ is nonempty then either $T\prec P$, or $P\prec T$, or $P=T$.
	Moreover, $p_{TS}(V_{TS}\cap V_{PS})\subset V_{PT}$ if $P\prec T\prec S$.
	\item If $P\prec T\prec S$ then $p_{PT}p_{TS}(x) = p_{PS}(x)$ for all $x\in V_{TS}\cap V_{PS}$.
	\item If $P\preceq T\prec S$ then $p_{TS}^{-1}(\bigcup_{Q\prec P} V_{QT}) = \bigcup _{Q\prec P} V_{QS}\cap V_{TS}$.
	\item Any depth 1 point of $V_S$ is in at most one $V_{TS}$.
	\item If $T\prec S$ then $\Cl p_{TS}^{-1}( \partial_k V_T - \bigcup_{P\prec T}V_{PT}) = \Cl( \partial_k V_S -\bigcup_{P\prec S}V_{PS})\cap V_{TS}$.
	\item The index set is countable and for each $T$ there are only finitely many $S$ so that $T\prec S$.
\end{enumerate}

Conditions 1 through 5 are the analogues of conditions I through V in the definition of resolution tower in \cite{AK}.
The last condition 6 does not appear in the definition of resolution tower in \cite{AK} but probably should have
since it is essential for the realization to be locally compact and second countable.

As in \cite{AK} the realization of an untico resolution tower is the quotient space $\bigcup V_S/\sim$ where $\sim$
is the equivalence relation generated by $x\sim p_{TS}(x)$ for $x\in V_{TS}$.

\begin{lemma}
	The realization of an untico resolution tower $\{V_S,V_{TS},p_{TS}\}$
	is a stratified set with strata $V_S-\bigcup_{T\prec S}V_{TS}$.
\end{lemma}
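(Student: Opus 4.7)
The plan is to realize $|X| = q(\bigsqcup_S V_S)$ for the quotient map $q$, set $V_S^\circ := V_S - \bigcup_{T \prec S} V_{TS}$, and verify in turn each condition in the definition of a stratified set: that the $q(V_S^\circ)$ are pairwise disjoint smooth manifolds with convex corners on the boundary, form a locally finite decomposition of $|X|$ into locally closed subsets, satisfy the frontier condition, and that $|X|$ itself is locally compact, second countable, and Hausdorff. By condition 6 the union $\bigcup_{T \prec S} V_{TS}$ is a finite union of closed unticos, so $V_S^\circ$ is open in $V_S$ and automatically inherits the required smooth structure.

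The first step is a combinatorial analysis of the equivalence relation $\sim$. I would take a shortest chain of elementary relations $y = z_0, z_1, \ldots, z_n = y'$ between putatively related points $y \in V_S^\circ$ and $y' \in V_{S'}^\circ$. Each step is either a down-step (from $z_i \in V_{TU}$ to $p_{TU}(z_i) \in V_T$) or the reverse up-step. Since no point of $V_S^\circ$ lies in any $V_{TS}$, the chain cannot start with a down-step and cannot end with an up-step. Conditions 1 and 2 then let me collapse any consecutive up-down pair: if $z_{i+1} \in V_{U_i U_{i+1}} \cap V_{U_{i+2} U_{i+1}}$, condition 1 forces $U_i$ and $U_{i+2}$ to be comparable or equal, and the compatibility $p_{PT} p_{TS} = p_{PS}$ of condition 2 reduces the three steps to a single step. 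A minimal chain therefore has no up-down transition, and together with the endpoint constraints this forces $n = 0$, so $S = S'$ and $y = y'$.

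Next I would build the topology of $|X|$ by a well-founded induction on the partial order on strata. For the minimal strata the realization restricts to a disjoint union of manifolds with convex corners, and the inductive step attaches each new $V_S$ along the untico $\bigcup_{T \prec S} V_{TS}$ via the proper map $\bigsqcup_{T \prec S} p_{TS}$. Since each $p_{TS}$ is proper and the untico is closed, the attaching preserves local compactness, second countability, and Hausdorffness; local finiteness and local closedness of the strata likewise propagate, and the injectivity from the previous step ensures that each $q|_{V_S^\circ}$ becomes an embedding onto an open subset of the newly formed space. Condition 6 then guarantees that this inductive construction passes to the global colimit.

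For the frontier condition, if $q(V_T^\circ) \cap \Cl q(V_S^\circ) \neq \emptyset$ the chain analysis forces $T \prec S$, and properness of $p_{TS}$ together with condition 3 yields $q(V_T^\circ) \subset \Cl q(V_S^\circ)$, while the depth-refined identity $\bar{\partial}_k q(V_T^\circ) = q(V_T^\circ) \cap \Cl \partial_k q(V_S^\circ)$ is essentially a direct translation of condition 5. I expect the principal obstacle to be the inductive attaching step, specifically verifying that attaching $V_S$ along the untico via $\bigsqcup p_{TS}$ yields a Hausdorff space whose subspace topology on the newly attached $V_S^\circ$ agrees with its intrinsic topology, and that this behavior persists under the global colimit governed by condition 6. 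The combinatorial injectivity and frontier identities are, by comparison, largely unpackings of the stated axioms.
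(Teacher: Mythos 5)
Your chain-collapse argument for injectivity is correct and is essentially the paper's argument, cleanly reorganized: the paper also takes a shortest chain, forces the first step to be an up-step because $y\notin\bigcup V_{TS}$, and then derives a contradiction from the first peak; your observation that conditions 1 and 2 let one collapse \emph{any} up-down pair, reducing a minimal chain to a form $D^*U^*$ which the endpoint constraints kill, packages the same idea.

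The gap is in the point-set topology, which is where the paper spends most of its effort. You propose a well-founded induction on $\prec$, attaching each $V_S$ along $\bigcup_{T\prec S}V_{TS}$ and then passing to a colimit, with the remark that ``condition 6 then guarantees that this inductive construction passes to the global colimit.'' That remark is precisely what needs proof, and it is not a formality: colimits along closed inclusions routinely destroy local compactness (compare $\R^\infty$), and you would also need the attaching map $\bigcup_{T\prec S}V_{TS}\to Z_{<S}$ to be a well-defined proper map into the already-built quotient, which is not an immediate consequence of properness of the individual $p_{TS}$. The paper instead proves local compactness, Hausdorffness, and second countability directly by constructing, for each point, a compatible family of relatively compact open sets $W_S\subset V_S$ (by induction on height, using condition 6 to keep the family finite) whose images glue to a compact neighborhood, then repeating the construction with metric control for a countable basis. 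Your proposal would have to reproduce essentially this construction inside the colimit step, and as written it stops short exactly there; you flag this yourself, which is honest, but it means the proof is incomplete at the crux. Two smaller slips: condition 6 bounds $\{S : T\prec S\}$ for fixed $T$, not $\{T : T\prec S\}$ for fixed $S$, so it does \emph{not} make $\bigcup_{T\prec S}V_{TS}$ a finite union as you assert in your opening paragraph; and your claim that ``the chain analysis forces $T\prec S$'' whenever $q(V_T^\circ)\cap\Cl q(V_S^\circ)\ne\emptyset$ conflates the combinatorics of $\sim$ with the topology of the quotient --- to say anything about $\Cl q(V_S^\circ)$ you already need the local topological picture near $q(V_T^\circ)$, so this step depends on the very part of the argument that is missing.
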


\begin{proof}
	For each $S$, let $V'_S = V_S -\bigcup_{T\prec S}V_{TS}$.	
		Let $Z$ denote the realization, let $q\colon \bigcup V_S\to Z$ be the quotient map
		and let $q_S=q|_{V_S}$. Pick any $z\in Z$. Then $z=q(y)$ for some $y\in V_S$.
		We may as well choose such a $y$ so that $S$ has the smallest dimension possible.
		We cannot have $y\in V_{TS}$ for any $T$ since that would mean $z=q(y')$ where $y' = p_{TS}(y)\in V_T$
		and $T$ has smaller dimension than $S$.  So $y\in V'_S$.
		
	Suppose $y\sim x$ for some $y\in V'_S$ and $x\in V'_P$
	and $y\ne x$.
	There must be some points $y_i\in V_{S_i}$, $i=0,\ldots,k$ so that $y=y_0$, $x = y_k$ and 
	for each $i=1,\ldots, k$ either $y_i = p_{S_iS_{i-1}}(y_{i-1})$ or $y_{i-1} = p_{S_{i-1}S_i}(y_i)$.
	Take such a sequence where $k$ is as small as possible.
	We cannot have $y_1 = p_{S_1S_{0}}(y_{0})$ since $y\not\in V_{S_1S}$, so $y = p_{SS_1}(y_1)$.
	We cannot have  $y_1 = p_{S_1S_{2}}(y_{2})$
	since then $y = p_{SS_{2}}(y_{2})$ and a shorter sequence is possible.
	So  $y_2 = p_{S_2S_{1}}(y_{1})$ and thus $y_1\in V_{S_2S_1}$ and so $y = p_{SS_1}(y_1)\in V_{S_2S}$, a contradiction.
	
	So we know $Z$ can be decomposed into the disjoint manifolds $V'_S$.
	We must show all the conditions for this decomposition to be a stratified set.
	The reader can no doubt simplify the argument I give below which seems more involved than it should be.
	
	If $T\preceq S$ we define the height of $S$ above $T$ as the maximal $k$ so that there are
	$S_0,\ldots,S_k$ with $T=S_0$, $S=S_k$, and $S_0\prec S_1\prec\cdots\prec S_k$.
	
	Let us show $Z$ is locally compact.  So take any $z\in Z$, then $z=q_T(x)$ with $x\in V'_T$ for some $T$.
	Take an open \nbhd\ $W_T$ of $x$ in $V'_T$ so that $\Cl W_T$ is a compact subset of $V'_T$.
	Suppose by induction we have found open sets $W_S$ in $V_S$ for all strata $S$ of height $<k$ above $T$ so that
	$\Cl W_S$ is compact and
	if $ Q\prec S$ then $p_{QS}^{-1}(W_Q) = W_S\cap V_{QS}$ if $T\preceq Q$ and $ W_S\cap V_{QS}$ is empty otherwise.
	Take any stratum $S$ of height $k$ above $T$.
	Let $C_S = \bigcup_{ T\preceq Q\prec S}  p_{QS}^{-1}( W_Q)$.
	Since each $p_{QS}$ is proper and there are only finitely many $Q$ above $T$ we know that 
	$\Cl C_S$ is compact.
	Suppose $x\in C_S\cap V_{QS} $ for some $Q$ with $ Q\prec S$
	and $x\not\in  p_{QS}^{-1}( W_Q)$ if $T\preceq Q$.
	Then $x\in  p_{PS}^{-1}( W_P)$ for some $P\ne Q$ with  $T\preceq P\prec S$.
	Since $x\in V_{PS}\cap V_{QS}$ we know either $P\prec Q$ or $Q\prec P$.
	If $P\prec Q$ we know $p_{QS}(x)\in p_{PQ}^{-1}(W_P)$ so $p_{QS}(x)\in W_Q$ a contradiction.
	If $Q\prec P$ we know $p_{PS}(x)\in W_P\cap V_{QP}$ so $T\preceq Q$ and $p_{PS}(x)\in p_{QP}^{-1}(W_Q)$
	and thus $p_{QS}(x)\in W_Q$, a contradiction.
	So $C_S\cap V_{QS} =  p_{QS}^{-1}( W_Q)$ for all $T\preceq Q\prec S$
	and $C_S\cap V_{QS}$ is empty otherwise.	
	Take a compact \nbhd\ $W'_S$ of $\Cl C_S$ in $V_S$. Now let $W_S$ be an open \nbhd\ of $C_S$ in $\Int W'_S$
	 so that $W_S \cap V_{QS} = C_S\cap V_{QS}$ for all $Q\prec S$.
	So by induction we know we can find $W_S$ for all $T\preceq S$.
	Then $\bigcup_{T\preceq S }	q_S(W_S)$ is an open \nbhd\ of $z$ in $Z$.
	Its closure is compact since it is contained in the compact set $\bigcup_{T\preceq S }	q_S(\Cl W_S)$.
	So $Z$ is locally compact.
	
	Let us now show $Z$ is Hausdorff.
	Take $z\ne z'$ in $Z$.
	We have $x\in V'_T$ and $x'\in V'_{T'}$ for some $T$ and $T'$ so that $z=q_T(x)$ and $z' = q_{T'}(x')$.
	Just as above we construct $W_S$ for all $T\preceq S$ and $W'_S$ for all $T'\preceq S$.
	We must just make sure that each $W_S\cap W'_S$ is empty.

	Now let us show second countability.
	First put a  metric $d_S$ on each $V_S$.
	For each $T$ we can take a countable basis $\{W_{Ti}\}$ of $V'_T$ with each $\Cl W_{Ti}$ a compact subset of $V'_T$.
	We introduce another integer index $j$.  
	As above, for each $S$ with $T\prec S$ and each $i,j$ we construct an open subset $W_{TSij}$ of $V_S$ with compact closure,
	starting with $W_{TTij}=W_{Ti}$.
	The only extra bit is we make sure that $W_{TSij}$ lies within distance $1/j$ of 
	$C_{TSij}=\bigcup_{ T\preceq Q\prec S}  p_{QS}^{-1}( W_{TQij})$
	and if $j>1$ we require that $W_{TSij}\subset W_{TSij-1}$.
	We then claim the countable collection of open sets $U_{Tij} = \bigcup_{T\preceq S} q_S(W_{TSij})$
	is a basis for the topology of $Z$.
	So take any open set $O\subset Z$ and any $z\in O$.
	Suppose $z=q_T(x)$ for $x\in V'_T$.
	Start out by choosing $i$ so that $x\in W_{Ti}$ and $\Cl W_{Ti}\subset q_T^{-1}(O)$.
	We suppose by induction on $k$ that we have a sequence of integers
	$j_0\le j_1 \le \cdots \le j_{k}$ so that for any $S$ of height $\ell\le k$ above $T$ we have $\Cl W_{TSij}\subset q_S^{-1}(O)$
	for all $j\ge j_\ell$.
	To complete the induction, note that if $S$ has height $k+1$ above $T$ then
	$\Cl C_{TSij_k}\subset q_S^{-1}(O)$ since if $T\prec Q\prec S$ then 
	$\Cl  p_{QS}^{-1}( W_{TQij_k}) \subset p_{QS}^{-1}q_Q^{-1}(O) \subset q_S^{-1}(O)$.
	So we need only choose $j_{k+1}$ so that for each $S$ of height $k+1$ above $T$,
	the set of points of distance $\le 1/j_{k+1}$ from $C_{TSij_k}$ is contained in $q_S^{-1}(O)$.
	Finally, if $k$ is the maximum height above $T$ we know that $x\in U_{Tij_k}\subset O$.
\end{proof}
  
I presume the realization of an untico resolution tower is a Thom stratified set,
if perhaps one puts a few simple restrictions on the tower, but haven't bothered to think about it
to discover what restrictions, if any, are needed.
However we will show that any Thom stratified set is the realization of an untico resolution tower.
The construction is quite simple.  For each stratum $S$ let $\delta_S$ be a frontier limit function.
Then we set
\begin{eqnarray*}
	V_S &=& S - \bigcup_{T\prec S} \{ x\in U_T \mid \rho_T(x) < \delta_T\pi_T(x)\}\\
	V_{TS} &=& \{ x\in V_S\cap U_T \mid \rho_T(x) = \delta_T\pi_T(x)\}\\
	p_{TS} &=& \pi_T|_{V_{TS}}
\end{eqnarray*} 

\begin{lemma}\label{resolution}
	Suppose $\{ (U_S,\pi_S,\rho_S,\gamma_S)  \}$ is enhanced Thom data for a Thom stratified set $X$.
	Suppose for each stratum $S$ of $X$ we choose a smooth $\delta_S\colon S\to (0,\infty)$ so that $\delta_S<\gamma_S$
	everywhere.  Then the above $\{V_S,V_{TS},p_{TS}\}$ is an untico resolution tower with realization $|X|$.
	Moreover, this tower has some special properties:
	\begin{enumerate}
		\item Each $V_{TS}$ is itself a manifold with convex corners on the boundary (i.e., it has no interior creases).
		\item If $P\prec T\prec S$ then $p_{TS}^{-1}(V_{PT})= V_{PS}\cap V_{TS}$.
		\item Each $p_{TS}$ is a strong submersion.
		\item Up to isomorphism, the tower is independent of the choice of $\delta_S$.
	\end{enumerate}
\end{lemma}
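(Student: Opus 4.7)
The plan is to unpack the construction stratum by stratum, verifying the six axioms of an untico resolution tower, identifying the realization with $|X|$, and then peeling off the four special properties at the end. Throughout I would use the coherence relations $\pi_P\pi_T=\pi_P$ and $\rho_P\pi_T=\rho_P$ from enhanced condition 2, plus the fact that Thom data preserves depth.

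First I would check $V_S$ is a manifold with convex corners and $V_{TS}\subset V_S$ is an untico. By enhanced condition 4, for any chain $T_1\prec\cdots\prec T_\ell\prec S$ the map $(\rho_{T_1},\ldots,\rho_{T_\ell},\pi_{T_1})$ is a strong submersion on $S\cap U_{T_1}\cap\cdots\cap U_{T_\ell}$, so locally we can take the $\rho_{T_i}$ as independent non-negative coordinates. In such coordinates each inequality $\rho_{T_i}\ge\delta_{T_i}\pi_{T_i}$ cuts off a smooth half-space whose defining function depends only on the $\pi_{T_1}$ coordinates plus the $i$-th non-negative coordinate; these half-spaces meet in a standard convex octant (after a smooth change of variables), and the depth-preservation axiom of Thom data ensures they are in general position with the pre-existing corners of $S$. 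Thus $V_S$ inherits convex corners, and each $V_{TS}$ is a closed union of depth-$1$ components. Properness of $p_{TS}=\pi_T|_{V_{TS}}$ is immediate from Lemma \ref{frontier_proper} since $V_{TS}\subset\{\rho_T\le\delta_T\pi_T\}$ and $\delta_T<\gamma_T$.

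Next I would dispose of the axioms mechanically. Axiom 1 follows because $V_{TS}\cap V_{PS}\subset U_T\cap U_P$ (enhanced condition 3), and the inclusion $p_{TS}(V_{TS}\cap V_{PS})\subset V_{PT}$ reduces to $\rho_P(\pi_T(x))=\delta_P\pi_P(\pi_T(x))$, which is exactly $\rho_P(x)=\delta_P\pi_P(x)$ by enhanced condition 2. Axiom 2 is the identity $\pi_P\pi_T=\pi_P$. Axiom 3 and the sharper property (2) of the lemma come from the same substitution: $\pi_T(x)\in V_{QT}$ iff $\rho_Q(x)=\delta_Q\pi_Q(x)$. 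Axiom 4 follows because enhanced condition 4 forces distinct defining hypersurfaces to meet transversally, creating depth $\ge 2$ at intersections. Axiom 5 uses depth preservation of $\pi_T$. Axiom 6 follows from second countability plus the observation that, for any $T$, picking $x\in T$ and applying local finiteness to a neighborhood of $x$ bounds the number of $S$ with $T\prec S$. The strong submersion property (3) is enhanced condition 4 with $\ell=1$.

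The most delicate piece is identifying the realization with $|X|$; this and the independence statement (4) are the genuine obstacles. For the realization, after reparameterizing via Lemma \ref{reparam} to arrange $\gamma_S\equiv 1$ and $\delta_S\equiv 1/2$, I would construct a continuous surjection $\Phi\colon\bigsqcup V_S\to|X|$ by induction on the height of $S$ above the depth $0$ strata. On a stratum $S$, the flow of the vector field dual to $\rho_T$ inside the fibers of $\pi_T$ (guaranteed by the strong submersion in enhanced condition 4) gives a canonical diffeomorphism between $\{\rho_T\le 1/2\}\cap S$ and the mapping cylinder of $\pi_T|_{V_{TS}}$; using this I stretch each $V_S$ across the collapsed tubular neighborhoods $\{\rho_T<1/2\}$ so that $V_{TS}$ lands on the image of $V_T$ under the lower-level map. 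The compatibility $\pi_P\pi_T=\pi_P$ makes these stretchings agree under the identifications, so $\Phi$ factors through the quotient; it is a homeomorphism onto $|X|$ because it is a bijection of locally compact Hausdorff spaces that is proper stratum-by-stratum. For property (4), I would interpolate two choices $\delta_S^0,\delta_S^1$ through the convex combination $\delta_S^t=(1-t)\delta_S^0+t\delta_S^1$, integrate the resulting $1$-parameter family of frontier limit functions along the $\rho_T$-flow to produce an isomorphism of towers. The trickiest bookkeeping in all of this is keeping the inductive stretching globally consistent when a stratum $S$ has several comparable lower neighbors $T_1\prec T_2\prec\cdots\prec T_\ell\prec S$; here enhanced condition 4 is again what makes the iterated flows commute.
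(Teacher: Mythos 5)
Your overall strategy matches the paper's: reparameterize via Lemma~\ref{reparam} so that the $\gamma_S$ and $\delta_S$ are constants, verify the tower axioms via the strong-submersion local coordinates supplied by enhanced condition 4, and then produce the realization isomorphism by a flow/stretching that uses the $(\pi_T,\rho_T)$ control structure to push $V_S$ outward onto $\Cl S$. The convex-combination interpolation you propose for property (4) is a legitimate minor variant of the paper's reduction to nested $\delta'_S<\delta_S$ followed by the time-$\delta_S-\delta'_S$ flow.

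However, there is a genuine gap in the realization step. Your stratum-by-stratum ``stretching'' implicitly assumes that the flow within each $\{\rho_T\le 1/2\}$ can be integrated for the full required time and that the resulting maps fit together continuously across $|X|$. In the non-compact case this is not automatic: flowing along a fiber of $\pi_T$ is controlled by the properness in enhanced condition~5, but the base point $\pi_T(x)$ also moves (the paper's vector field requires $d\pi_T(v_S)=v_T\pi_T$, not $d\pi_T(v_S)=0$), so the trajectory can wander within a non-compact stratum. The paper handles this by invoking Lemma~\ref{proper_r} to get a proper $r\colon|X|\to\R$ with $r\pi_S=r$ near $S$, and then imposing $dr(v_S)=0$ on the controlled vector field. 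This traps every flow line in a compact level set of $r$, guaranteeing the flow exists up to time $\delta_S$ and that the limits $\phi_{\delta_S}$ are well-defined. Your sketch never produces this proper invariant function, and the appeal to ``it is a bijection of locally compact Hausdorff spaces that is proper stratum-by-stratum'' presupposes rather than proves exactly what is at issue. Relatedly, the ``iterated flows commute'' worry you flag is in the paper resolved not by enhanced condition~4 alone but by constructing a single globally controlled vector field via partitions of unity whose local pieces satisfy the linear compatibility constraints; that construction needs to be carried out, not just asserted.

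A smaller point: your justification of axiom~6 (``picking $x\in T$ and applying local finiteness to a neighborhood of $x$'') only bounds the strata meeting a single neighborhood, not the set of all $S$ with $T\prec S$ when $T$ is non-compact, so that argument as stated does not suffice.
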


\begin{proof}
	For convenience, we may as well reparameterize using Lemma \ref{reparam} so that $\delta_S = 1/2$ and $\gamma_S = 1$
	are both constant.
	
	Take any $x\in V_S$ and let $T_1,\ldots,T_k$ be all the strata with $\rho_{T_i}(x) = 1/2$.
	Let $\ell$ be the depth of $x$ in $S$.
	Since $\rho_{T_1}\times\cdots\times \rho_{T_k}$ is a strong submersion near $x$ we can choose local
	coordinates $y$ near $x$ so that $\rho_{T_i}(y) = 1/2 + y_i$ and $S$ is given by $y_i\ge 0$,
	$i=k+1,\ldots,k+\ell$.
	Then near $x$, $V_S$ is given by the inequalities $y_i\ge 0$, $i=1,\ldots,k+\ell$ and so $V_S$
	has convex corners and the depth of $x$ in $V_S$ is $k+\ell$.
	If $x\in V_{TS}$ then after reordering, $T=T_1$  and $V_{TS}$ is locally given by the equations
	$y_1=0$, $y_i\ge 0$ for $i=2,\ldots k+\ell$.
	Thus $V_{TS}$ is a manifold with convex corners on the boundary.
	If $x$ has depth 0 in $V_{TS}$ then $\ell=0$ and $k=1$, thus the depth 0 points of $V_{TS}$
	are a union of connected components of the depth 1 points of $V_S$.
	So $V_{TS}$ is a union of closures of components of the depth 1 points of $V_S$.
	If $x$ has depth 1 in $V_S$ then either $k=1$ and $\ell=0$ in which case $x\in V_{T_1S}$ and $x$ is in no other $V_{TS}$,
	or $k=0$ and $\ell=1$ in which case $x$ is in no $V_{TS}$.

	If $x\in V_{TS}\cap V_{PS}$ for $P\ne T$ then after reordering, $T=T_1$ and $P=T_2$.
	Since $x\in U_P\cap U_T$ we know either $P\prec T$ or $T\prec P$.
	Suppose $P\prec T$. For any $Q\prec T$ we have 
	$$\rho_Q p_{TS}(x ) = \rho_Q\pi_T(x) = \rho_Q(x) \ge 1/2$$
	and thus $p_{TS}(x)\in V_T$.
	Also $\rho_P p_{TS}(x) = \rho_P(x) = 1/2$ so $p_{TS}(x)\in V_{PT}$.
	We also have $p_{PT}p_{TS}(x)= \pi_P\pi_T(x) = \pi_P(x) = p_{PS}(x)$.
	
	Now suppose $x\in V_{TS}$ and $p_{TS}(x)\in V_{PT}$.
	Then $1/2  = \rho_P p_{TS}(x) = \rho_P\pi_T(x) = \rho_P(x)$ so $x\in V_{PS}$.
	
	Finally we must show $p_{TS}$ is a strong submersion.
	Take any $x\in V_{TS}$ and let $T_1,\ldots,T_k$ be all the strata with $\rho_{T_i}(x) = 1/2$.
	Order so that $T_1\prec T_2\prec\cdots\prec  T_k$.
	We know $T=T_n$ for some $n$.
	Since $\rho_{T_1}\times \cdots\times \rho_{T_{n-1}}$ submerses $T$
	we can choose cooordinates $z$ near $\pi_T(x)$ so that 
	in these coordinates,   $T$ is given by the inequalities
	$z_i\ge 0$ for $i = 1,\ldots,\ell$ and  $\rho_{T_i}(z) = 1/2 + z_{\ell+i}$ for $i=1,\ldots,n-1$  
	where $\ell$ is the depth of $\pi_T(x)$ in $T$
	(so $\ell$ is also the depth of $x$ in $S$).
	Since $\pi_{T_n}\times \rho_{T_n}\times \rho_{T_{n+1}}\times \cdots\rho_{T_k} $ is a strong 
	submersion we know by Lemma \ref{submersion}  we can choose coordinates $y$ near $x$ so that in these coordinates
	$S$ is given by $y_i\ge 0$ for $i = 1,\ldots,\ell$, $\pi_T$ is given by projection to the $z$ coordinates
	(i.e., if $m=\dim T$ the $z_i$ coordinate of $\pi_T(y)$ is $y_i$ for $i\le m$), and $\rho_{T_i}(y) = 1/2 + y_{m+i-n+1}$
	for $i= n,\ldots,k$.  Since $\rho_{T_i}=\rho_{T_i}\pi_T$ for $i<n$ we also know that $\rho_{T_i}(y) = 1/2 + y_{\ell+i}$
	for all $i<n$.
	We now see that $p_{TS}$ is a strong submersion.
	In particular, $V_T$ is given by the inequalities $z_i\ge 0$ for $i = 1,\ldots, \ell+n-1, m+1,m+2,\ldots,m+k-n+1$,
	$V_{TS}$ is given by the inequalities $y_{m+1}=0$, $y_i\ge 0$ for $i = 1,\ldots, \ell+n-1, m+2, m+3,\ldots, m + k -n +1$,
	and $p_{TS}$ is given by projection.
	
	So we have shown that $\{ V_S, V_{TS}, p_{TS}\}$ is an untico resolution tower
	with some very special properties.
	
	Let us now show that its realization is $|X|$.
	By Lemma \ref{proper_r}
	there is a smooth proper map $r\colon |X|\to [0,\infty)$ so that for all strata $S$
	and $x\in \rho_S^{-1}([0,2/3])$,
	$r\pi_S(x) = r(x)$.

	For any $s\in (0,1/2]$ let $V^s_S = S - \bigcup_{T\prec S} \{ x\in U_T \mid \rho_T(x) < s\}$ and
	$V_{TS}^s = \{ x\in V_S^s\cap U_T \mid \rho_T(x) = s\}$.
	Thus $V_S = V_S^{1/2}$ and $V_{TS} = V_{TS}^{1/2}$.
	We will define a vector field $v$ on $|X|$ whose flow $\phi_t$ is continuous,
	so that $\phi_t(V_S^s) = V^{s-t}_T$ and $\phi_t(V_{TS}^s)= V_{TS}^{s-t}$ for $t\in [0,s)$,
	so that $dr(v) = 0$, 
	so that $\pi_S\phi_t = \phi_t\pi_S$ and so $\lim_{t\to 1/2}\phi_t(x)$ exists for all $x\in V_S$
	and the limits give a continuous map $\phi_{1/2}\colon V_S\to \Cl S$ so that 
	for $x\in V_{TS}$, $\phi_{1/2}(x) = \phi_{1/2}\pi_T(x)$.
	Moreover $\phi_{1/2}$ restricts to a diffeomorphism of $V_S - \bigcup_{T\prec S}V_{TS}$ to $S$.
	Consequently, $\phi_{1/2}$ induces an isomorphism of the realization of the resolution tower to $|X|$.
	
	To construct $v$ it suffices to construct for each stratum $S$ a smooth vector field $v_S$ on $S$
	so that:
	\begin{itemize}
		\item  $v_S$ is tangent to $\partial_k S$ for each $k$,
		\item  $dr(v_S)=0$,
		\item  $d\rho_T(v_S(x)) = -1$ and  $d\pi_T(v_S(x))= v_T(\pi_T(x))$     if $x\in \rho_T^{-1}((0,1/2])$,
		\item  $d\rho_T(v_S(x))\ge -1$ if $x\in \rho_T^{-1}((1/2,1))$.
	\end{itemize}
	Furthermore it suffices to construct such a $v_S$ locally and piece together with a partition of unity.
	
	So suppose we have constructed $v_T$ for all $T\prec S$.
	Take any $y\in S$ and let us construct $v_S$ locally around $y$.
	Let $T_1\prec T_2\prec \cdots\prec T_k$ be all the strata so that $y\in U_{T_i}$.
	Let $j$ be the highest index so that $\rho_{T_j}(y)\le 1/2$.
	If there is no such $j$ we may locally set $v_S=0$.
	Since $\pi_{T_j}\times \rho_{T_j}\times \cdots\times \rho_{T_k}$ is a strong submersion we may choose 
	$v_S$ locally so that $d\pi_{T_j}(v_S) = v_{T_j}$, $d\rho_{T_j}(v_S) = -1$, and $d\rho_{T_i}(v_S)=0$
	for $i>j$.
	Note $dr(v_S) = drd\pi_{T_j}(v_S) = dr(v_{T_j}) = 0$.
	Also for $i<j$ we have $d\rho_{T_i}(v_S)=d\rho_{T_i}d\pi_{T_j}(v_S)= d\rho_{T_i}(v_{T_j}) \ge -1$
	and is $=-1$ if $\rho_{T_i}(x)\le 1/2$.
	Also if  $\rho_{T_i}(x)\le 1/2$ then 
	$d\pi_{T_i}(v_S)=d\pi_{T_i}d\pi_{T_j}(v_S)= d\pi_{T_i}(v_{T_j}) = v_{T_i}$.
	
	We must now show the tower is independent of the choice of $\delta_S$ so suppose we made other choices $\delta'_S$.
	It suffices to prove the case where $\delta'_S<\delta_S$ everywhere since we can always choose a third $\delta''_S$
	less than both.  We may as well reparameterize so that $\delta'_S = 1/4$, $\delta_S = 1/2$, and $\gamma_S=1$ everywhere.
	Then note that the untico resolution tower $\{V'_S,V'_{TS},p'_{TS}\}$ we get using $\delta'_S$ is exactly
	$V'_S = V^{1/4}_S$, $V'_{TS} = V^{1/4}_{TS}$ and $p'_{TS}=\pi_S|_{V'_{TS}}$.
	Then the map $\phi_{1/4}\colon V_S\to V'_S$ gives an isomorphism of the untico resolution towers.
	Note 
	$\phi_{1/4}p_{TS}(x)=\phi_{1/4}\pi_S(x) = \pi_S\phi_{1/4}(x) = p'_{TS}\phi_{1/4}(x)$ for all $x\in V_{TS}$.
\end{proof}

An earlier version of the paper required the following result.
It is no longer required, but we include it anyway.
It could be easily generalized to have $q$ be any proper strong submersion to a manifold $Q$ with convex corners on the
boundary, and then $\{V_S,V_{TS},p_{TS}\}$ would fiber over $Q$, but let's keep it simple.

\begin{lemma}\label{concordance_weak}
	Suppose $\{ (U_S,\pi_S,\rho_S,\gamma_S)  \}$ is enhanced Thom data for a Thom stratified set $X$
	  and $q\colon |X|\to [0,1]$ is a proper map which restricts to a strong submersion
	on each stratum of $X$ so that $q\pi_S = q|_{U_S}$ for all strata $S$.
	 Let $\{V_S,V_{TS},p_{TS}\}$ be the resulting untico resolution tower of $X$.
	Then there is an untico resolution tower $\{V'_S,V'_{TS},p'_{TS}\}$ so that $\{V_S,V_{TS},p_{TS}\}$
	is isomorphic to  $\{V'_S,V'_{TS},p'_{TS}\}\times [0,1]$.
	In particular, for each stratum $S$ there are diffeomorphisms $h_S\colon V'_S\times [0,1]\to V_S$ so that
	$qh_S(x,t) = t$, $h_S(V'_{TS}\times [0,1])= V_{TS}$, and $p_{TS}h_S(x,t) = h_T(p'_{TS}(x),t)$.
	Moreover $\{V'_S,V'_{TS},p'_{TS}\}$ is an untico resolution tower for any of the Thom stratified sets $X\cap q^{-1}(t)$
	with Thom data obtained by restricting $\pi_S$ and $\rho_S$ to $U_S\cap q^{-1}(t)$.
\end{lemma}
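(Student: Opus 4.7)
The plan is to mimic the inductive vector-field construction from Lemma \ref{resolution}, but now build a ``horizontal'' vector field whose flow moves points along $q$ at unit rate while respecting both the corner stratification and the Thom projections. Integrating this flow will give the concordance $h_S$. Since $q$ is proper with compact target $[0,1]$, $|X|$ is compact, which will make issues about completeness of the flow disappear.

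First I would inductively (on the height of the stratum) construct for each stratum $S$ a smooth vector field $w_S$ on $S$ satisfying:
(a) $dq(w_S) = 1$;
(b) $w_S$ is tangent to $\partial_k S$ for every $k$;
(c) for each $T\prec S$ and each $x\in U_T\cap S$ with $\rho_T(x)\le \delta_T\pi_T(x)$ (for some small frontier limit function $\delta_T$), $d\pi_T(w_S(x))=w_T(\pi_T(x))$ and $d\rho_T(w_S(x))=0$.
The key point is that (a) and (c) are consistent: the hypothesis $q\pi_T=q|_{U_T}$ gives $dq\circ d\pi_T = dq$, hence (c) forces $dq(w_S(x))=dq(w_T(\pi_T(x)))=1$ automatically, matching (a). Locally, near a point where $x$ sits in the tubular neighborhoods of a chain $T_1\prec\cdots\prec T_k$, the joint strong submersion $(\rho_{T_1},\ldots,\rho_{T_k},\pi_{T_1})$ from Lemma \ref{goodthom}(4) provides coordinates in which $\rho_{T_i}$ and the projections $\pi_{T_i}$ are coordinate projections; together with the given $w_{T_1}$ (lifted to an $S$-tangent vector using the submersion $q$ on $S$), a valid local $w_S$ is obvious. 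Since all the conditions are affine in $w_S$, a partition of unity on $S$ patches the local constructions.

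Next, condition (c) says $w_S$ is tangent to each level set $\rho_T^{-1}(\delta_T)$ near $S\cap U_T$, so $w_S$ restricts to a smooth vector field on $V_S$ tangent to every face $V_{TS}$. Its flow $\phi_t^S$ is defined on $V_S$ for $t$ ranging over the full interval because $q\circ\phi_t^S = q + t$ and $q$ has compact preimages. I then set
$$V'_S \;=\; V_S\cap q^{-1}(0),\quad V'_{TS} \;=\; V_{TS}\cap q^{-1}(0),\quad p'_{TS}\;=\;p_{TS}|_{V'_{TS}},$$
and define $h_S(x,t)=\phi^S_t(x)$. Then $qh_S(x,t)=t$, and $h_S$ is a diffeomorphism sending $V'_{TS}\times[0,1]$ onto $V_{TS}$. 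The compatibility $p_{TS}h_S(x,t)=h_T(p'_{TS}(x),t)$ follows because condition (c) says $\pi_T$ intertwines $w_S$ with $w_T$, hence intertwines the flows.

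Finally, one checks that $\{V'_S,V'_{TS},p'_{TS}\}$ is the untico resolution tower associated by Lemma \ref{resolution} to the Thom stratified set $X\cap q^{-1}(0)$ with Thom data obtained by restriction; this is immediate since restricting $\pi_S,\rho_S$ to $U_S\cap q^{-1}(0)$ produces enhanced Thom data (strong submersions restrict to strong submersions of depth-preserving fibers by Lemma \ref{depth_submersion}), and the same recipe $V_S=S-\bigcup\{\rho_T<\delta_T\pi_T\}$ gives the stated tower. The flow $\phi^S_t$ then provides an isomorphism of towers from $q^{-1}(0)$ to $q^{-1}(t)$ for every $t\in[0,1]$. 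The main obstacle is the inductive construction of $w_S$ satisfying (a)-(c) compatibly across chains of strata; everything else is integration and bookkeeping.
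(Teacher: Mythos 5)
Your proposal is correct and takes essentially the same route as the paper: construct a controlled vector field $v_S$ on each stratum with $dq(v_S)=1$, tangent to the corner strata and intertwined by the Thom projections $\pi_T$ (with $d\rho_T v_S=0$ on the inner tubes), then integrate to a flow $\phi_t$ whose time-$t$ map gives $h_S(x,t)=\phi_t(x)$ and the product decomposition $V_S\cong V'_S\times[0,1]$ with $V'_S=V_S\cap q^{-1}(0)$. Your explicit checks (compactness of $|X|$ from properness of $q$, the consistency of $dq(v_S)=1$ with the projection constraints via $q\pi_T=q$, and tangency to the faces $V_{TS}$) are all the same ingredients the paper uses, just spelled out a bit more.
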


\begin{proof}
	As usual, we may as well suppose that $\gamma_{S}=1$.
	The next step is to find a controlled vector field $v$ on $|X|$ so that $dq(v) = 1$.
	In particular, for each stratum $S$ of $X$ we want a smooth vector  field $v_S$ on $S$
	so that 
	\begin{itemize}
		\item $dq(v_S)=1$.
		\item If $T\prec S$, $x\in S\cap U_T$ and $\rho_{T}(x)\le 1/2$ then 
		$d\pi_{T}v_S(x)= v_T\pi_{T}(x)$ and $d\rho_{T}v_S(x)=0$.
	\end{itemize}
	It suffices to find $v_S$ locally and piece together with a partition of unity
	and it is easy to find $v_S$ locally, c.f., the proof of Lemma \ref{resolution}.
	Now integrate the $v_S$ to find a flow $\phi_t$ on $|X|$.
	Let $\{V_S,V_{TS},p_{TS}\}$ be the untico resolution tower we obtain using $\delta_{S}=1/2$.
	Note that $\phi_t$ leaves each $V_S$ and $V_{TS}$ invariant and 
	we may set $V'_S=V_S\cap q^{-1}(0)$, $V'_{TS}=V_{TS}\cap q^{-1}(0)$, $p'_{TS}=p_{TS}|_{V'_{TS}}$,
	and $h_S(x,t) = \phi_t(x)$.
\end{proof}

\section{Whitney stratified subsets}

A stratified set $X$ is a Whitney stratified subset of a smooth manifold $M$ with convex corners on the boundary
if $|X|\subset M$ and all strata of $X$ are smooth submanifolds of $M$ so that:
\begin{itemize}
	\item Each stratum has boundary compatible with $M$.
	\item If $T\ne S$ are strata of $X$ and $x\in T\cap \Cl S$  then $(T, S)$ satisfies the Whitney conditions A and B at $x$
	which we define below.
\end{itemize}

Let us recall the Whitney conditions.
Suppose $S$ and $T$ are disjoint smooth submanifolds
of some smooth manifold $M$ with convex corners on the boundary
and both $S$ and $T$ have boundary compatible with $M$.
We say that $(T, S)$ satisfies the Whitney
conditions at a point $x\in T$ if
\begin{itemize}
	\item Condition A: Whenever $x_i\in S$ is a sequence converging to $x$
	so that the tangent spaces $T_{x_i}S$ of $S$ converge to
	some subspace $L$ of $T_xM$, then $T_xT\subset L$,
	the tangent space of $T$ at $x$ is contained in $L$.
	\item Condition B: Whenever $x_i\in S$ is a sequence converging to $x$
	and $x_i'\in T$ is a sequence converging to $x$ so that
	the tangent spaces $T_{x_i}S$ of $S$ converge to
	some subspace $L$ of $T_xM$ and the secant
	lines $x_ix_i'$
	converge to some line $\ell$ in $T_xM$, then $\ell \subset L$.
\end{itemize}
In the above, the secant lines are taken in some local
coordinates, it doesn't matter which.
Also, condition A is superfluous, it is implied by B
(just let $x_i$ approach $x$ much faster than $x_i'$
and $\ell$ can be any vector in $T_xT$).
However,  it is sometimes useful to prove
condition A anyway as this will help when proving B.

We say that $(T, S)$ satisfies the Whitney conditions
if it satisfies them at every point $x\in T$.

Whitney stratified sets have Thom data and hence are Thom stratified sets.
The reader should be able to adapt any standard proof of this to our
convex corner generalization of Whitney stratified set, but we prove it here anyway.
In particular, in \cite{Tico} we proved a stronger result- that if you choose
any $\rho_S$ you wish, subject to a mild condition that each $\rho_S$ look 
locally like a squared distance function, then you may construct $\pi_S$
so that $\{(U_S,\pi_S,\rho_S)\}$ is Thom data.

Statements and proofs given in the Whitney appendix of \cite{Tico} remain true 
for the convex corner case as long as we make the following modifications:
\begin{itemize}
	\item Manifolds are allowed to have convex corners on the boundary but submanifolds 
	are required to have compatible boundary.
	\item Replace submersion with strong submersion.  Keep in mind that Lemma \ref{depth_submersion} above shows that a depth preserving submersion is a strong submersion.
	\item Specify that maps preserve depth where appropriate.
	\item Replace
	parameter domains $V\subset \R^m$ with $V\subset [0,\infty)^\ell\times \R^{m-\ell}$.
\end{itemize}
We reproduce the suitably modified statements of the results below.
The proofs will be the same as in \cite{Tico} once the above modifications are made.
We start by recalling some definitions from \cite{Tico}, suitably modified.

Suppose $M$ is a smooth manifold with convex corners on the boundary,  $N$ is a smooth submanifold of $M$
whose boundary is compatible with $M$, and
$\rho\colon U\to [0, \infty)$ is a smooth function from an
open \nbhd\ $U$ of $N$ in $M$.
We say that $\rho$ is {\em distancelike}
around $N$ if:
\begin{itemize}
	\item At every point of $N$
	the Hessian of $\rho$ has rank equal to the codimension of $N$.
	\item $N\subset \rho^{-1}(0)$.
\end{itemize}
Lemma 6 of \cite{Tico} gives a local description of a distancelike function.
Suppose $z\in N$ and $h\colon(V,V\cap N,z)\to [0,\infty)^k\times(\R^{n-k}\times\R^{m-n},\R^{n-k}\times 0,0)$
is a coordinate chart around $z$.  Then there is a  symmetric matrix valued function $L(x,y)$
so that $\rho h^{-1}(x,y) = y^TL(x,y)y$ and every $L(x,0)$ is positive definite.
After shrinking $V$ and changing the $y$ coordinate you then get $\rho h^{-1}(x,y) = |y|^2$.

Lemma 14 of \cite{Tico} is modified to:

\begin{lemma}\label{Whitney_submersion}
	Suppose $M$ is a smooth manifold with convex corners on the boundary,
	$T$ and $S$ are submanifolds of $M$ with compatible boundary,
	$(T, S)$ satisfies the Whitney conditions,
	$U$ is a \nbhd\ of $T$ in $M$,
	$\rho\colon U\to [0,\infty)$ is distancelike around $T$,
	and $\pi\colon U\to T$ is a smooth retraction which preserves depth.
	Then there is a \nbhd\ $U'$ of $T$ in $U$ so that
	$\rho\times \pi|\colon U'\cap S\to (0,\infty)\times T$
	is a strong submersion.
\end{lemma}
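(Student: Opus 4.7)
The plan is to work in local coordinates around a point $z_0\in T$, put $\rho$ in the standard normal form $|y|^2$, prove ordinary submersivity of $\rho\times\pi|_S$ near $T$ by a Whitney-conditions argument, observe that depth is preserved, and apply Lemma \ref{depth_submersion} to upgrade to a strong submersion.

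Fix $z_0\in T$ and, by Lemma 6 of \cite{Tico}, choose coordinates $h\colon(V,z_0)\to[0,\infty)^k\times\R^{n-k}\times\R^{m-n}$ with $T\cap V=\{y=0\}$ and $\rho h^{-1}(x,y)=|y|^2$. Since $S\cap T=\emptyset$, we have $\rho>0$ on $S\cap V$. Suppose no shrinking of $V$ makes $\rho\times\pi|_S$ submersive; then there exist $z_i\in S$ with $z_i\to z\in T\cap V$ at which $d(\rho\times\pi)|_{T_{z_i}S}$ has rank strictly less than $\dim T+1$. Passing to a subsequence, $T_{z_i}S$ converges to a subspace $L\subset T_zM$ in the appropriate Grassmannian, and, writing $z_i=(x_i,y_i)$, the unit vectors $y_i/|y_i|$ converge to some $\eta\in\R^{m-n}$.

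Whitney condition A gives $T_zT\subset L$; Whitney condition B, applied with points $z_i\in S$ and $\pi(z_i)\in T$ (whose normalized secant direction in these coordinates is $y_i/|y_i|$ up to a component tangent to $T$), gives $\R\eta\subset L$. Hence $L$ contains the $(\dim T+1)$-dimensional subspace $T_zT\oplus\R\eta$. For large $i$ we can pick vectors $u_i^{(1)},\dots,u_i^{(\dim T)},w_i\in T_{z_i}S$ converging to a basis of $T_zT\oplus\R\eta$ of the form $\{\partial_{x_j}\}\cup\{(0,\eta)\}$. Since $\pi$ is a retraction with $d\pi|_{T_zT}=\mathrm{id}$, we have $d\pi(u_i^{(j)})\to\partial_{x_j}$ and $d\pi(w_i)\to 0$. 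Since $d\rho=2y\cdot dy$, we have $d\rho(u_i^{(j)})=o(|y_i|)$ while $d\rho(w_i)=2y_i\cdot(w_i)_y\sim 2|y_i|$. The $(\dim T+1)\times(\dim T+1)$ minor of $d(\rho\times\pi)$ with respect to these vectors has determinant asymptotic to $2|y_i|\neq 0$, so the rank is $\dim T+1$ for large $i$, contradicting our choice of $z_i$.

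Depth preservation is automatic: because $S$ has boundary compatible with $M$, the depth of a point in $S$ equals its depth in $M$; because $\pi$ preserves depth, this equals the depth of $\pi(z)$ in $T$; and because $\rho(z)>0$ on $S\cap V$, the depth of $(\rho(z),\pi(z))$ in $(0,\infty)\times T$ is the depth of $\pi(z)$ in $T$. Lemma \ref{depth_submersion} then upgrades the ordinary submersion to a strong one, and a standard patching argument over $T$ produces the global neighborhood $U'$. The main obstacle is the vanishing scale of $d\rho$ near $T$: since $d\rho$ is of order $|y_i|\to 0$, one must extract a leading-order minor (as above) or rescale the target to confirm that the rank remains $\dim T+1$ along the sequence instead of degenerating in the limit.
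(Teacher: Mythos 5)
Your proof is correct and matches the approach the paper intends: the paper itself defers to Lemma~14 of \cite{Tico} with the listed corner modifications, chief among them replacing ``submersion'' by ``strong submersion'' via Lemma~\ref{depth_submersion}, exactly as you do; and your Whitney~A/B contradiction argument with the rescaled minor determinant is the standard one. One small non-sequitur that happens not to matter: $\pi$ being a smooth retraction onto $\{y=0\}$ gives $\pi(x,y)=(x+B(x)y+O(|y|^2),0)$, so $d\pi(w_i)\to B(z)\eta$, which lies in $T_zT$ but is not in general zero; your determinant asymptotics survive because the cofactor of the $d\rho(w_i)$ entry still converges to $\det(\partial_{x_1},\dots,\partial_{x_{\dim T}})=1$ while the remaining first-row entries are $o(|y_i|)$ times bounded cofactors, so the determinant is still $\sim 2|y_i|$. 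You should simply delete the claim $d\pi(w_i)\to 0$. The same expansion $\pi(x,y)-(x,0)=O(|y|)$ is also what makes the normalized secant between $z_i$ and $\pi(z_i)$ converge to $(-B(z)\eta,\eta)$ with nonzero $y$-component, so that Whitney~B together with Whitney~A really does yield $(0,\eta)\in L$; this is worth stating explicitly rather than leaving parenthetical.
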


Lemmas \ref{frontier} and \ref{frontier_proper} above take the place of Lemma 16 of \cite{Tico}.
Much of Lemma 17 of \cite{Tico} is contained in Lemma \ref{goodthom} but the rest of it is modified to:

\begin{lemma}\label{piconstruct}
	Suppose $M$ and $N$ are smooth manifolds with convex corners on the boundary,
	$X$ is a Whitney stratified subset of $M$,
	$q\colon M\to N$ is a strong submersion which preserves depth, and
	$q$ restricts to a strong submersion on each stratum of $X$.
	Suppose that for each stratum $S$ of $X$ we choose a
	distancelike
	function $\rho_S\colon U''_S\to [0,\infty)$
	around  $S$. 
	Moreover, suppose that there is a closed set $Y\subset |X|$ and for each stratum
	$S$ we have a (possibly empty) \nbhd\ $U'_S$ of $Y\cap S$ in $U''_S$ and 
	a smooth retraction $\pi'_S\colon U'_S\to S\cap U'_S$ 
	so that $q\pi'_S=q$ and if $T\prec S$ then $\pi'_T\pi'_S = \pi'_T$ and
	$\rho_T\pi'_S = \rho_T$
	on $\pi'_S{}^{-1}(U'_T)\cap U'_T$.
	Then there are \nbhd s $U_S$ of $S$ in $U''_S$ and
	smooth retractions $\pi_S\colon U_S\to S$  so that $q\pi_S=q|_{U_S}$ for all strata $S$ and
	$\{(U_S,\pi_S,\rho_S|_{U_S})\}$ is Thom data for $X$.
\end{lemma}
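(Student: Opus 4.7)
The plan is to induct on the strata of $X$, constructing $\pi_S$ in order compatible with the partial order $\prec$ (refined by dimension), following the pattern of the corresponding Lemma 17 in \cite{Tico}. For a minimal stratum $S$ there are no compatibility constraints with lower strata, so one only has to produce a smooth $q$-relative tubular retraction $U_S\to S$ agreeing with $\pi'_S$ on $U'_S$. Using that $\rho_S$ is distancelike, locally around $S$ one has coordinates in which $\rho_S=|y|^2$ and $S=\{y=0\}$; projecting $(x,y)\mapsto(x,0)$ along fibers of the strong submersion $q$ gives a local retraction, and these local retractions can be glued together by a partition of unity and blended with the given $\pi'_S$.

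For the inductive step we have $\pi_T\colon U_T\to T$ for every $T\prec S$. First, define $\pi_S$ on a neighborhood of $(Y\cap S)\cup\bigl(S\cap\bigcup_{T\prec S}U_T\bigr)$ in $M$ by enforcing the identities $\pi_T\pi_S=\pi_T$, $\rho_T\pi_S=\rho_T$, and $q\pi_S=q$ against the maximal $T\prec S$ whose neighborhood contains the given point. By Lemma \ref{Whitney_submersion} the map $(\pi_T,\rho_T)\colon U_T\cap S\to T\times(0,\infty)$ is a strong submersion, so its fibers foliate $S\cap U_T$, and $\pi_S(x)$ is naturally defined as the nearest point of the fiber of $(\pi_T,\rho_T,q)$ through $x$ inside $S$, measured using the distancelike $\rho_S$. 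Where two neighborhoods $U_T$ and $U_P$ overlap, property 3 of Lemma \ref{goodthom} forces one of $T,P$ to precede the other, say $P\prec T$, and the inductive identities $\pi_P\pi_T=\pi_P$, $\rho_P\pi_T=\rho_P$ then ensure that enforcing compatibility with $T$ automatically enforces compatibility with $P$; the hypotheses on $\pi'_S$ include exactly the same identities, so this partial definition extends $\pi'_S$.

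Next, extend the partially defined $\pi_S$ to a full neighborhood $U_S$ of $S$ by covering the complement in $S$ of the already-handled subset with local distancelike charts for $\rho_S$ and gluing by a partition of unity, keeping everything fiberwise over $q$ and blending into the already-defined values where both exist. The Thom-data conditions in the definition of Thom stratified set then follow: $S=\rho_S^{-1}(0)$ is built into the distancelike hypothesis, the two identities $\pi_T\pi_S=\pi_T$ and $\rho_T\pi_S=\rho_T$ hold by construction on $U_S\cap U_T$, the strong-submersion requirement on $(\pi_T,\rho_T)\colon U_T\cap S\to(0,\infty)\times T$ is exactly Lemma \ref{Whitney_submersion}, and depth preservation of $\pi_S$ is inherited from depth preservation of $q$ and the local chart construction via Lemma \ref{depth_submersion}.

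The main obstacle is the simultaneous consistency of the local constructions on the overlaps $U_T\cap U_P\cap S$ and with $U'_S$; this is exactly the content the enhanced-Thom structure of Lemma \ref{goodthom} was designed to guarantee, and where the convex-corner modifications to the \cite{Tico} argument (strong submersion in place of submersion, depth preservation inserted wherever a map appears, and $[0,\infty)^\ell\times\R^{m-\ell}$ in place of $\R^m$ as parameter domain) are essential to keep the fiber-foliation arguments working at corner points.
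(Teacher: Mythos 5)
Your high-level inductive skeleton is right, and the paper does not actually spell out a detailed proof — it points explicitly to Lemma 13 of \cite{Tico} (stated here as Lemma \ref{make_retract_rel}) together with the notion of ``locally linear'' and says the proof is that of Lemma 17 of \cite{Tico} with the listed corner modifications. The trouble is that your sketch waves past exactly the step that Lemma \ref{make_retract_rel} exists to handle, namely how to splice together local retractions while preserving the constraint equations. A partition of unity cannot be applied naively to retractions $\pi_S$: a convex combination of points of $S$ is not defined, and even if you pass to charts the identity $\pi_T\pi_S=\pi_T$ is a nonlinear constraint which averaging destroys. The paper's mechanism is to show that the maps you want $\pi_S$ to respect (the $(\pi_T,\rho_T)$ for $T\prec S$ and $q$) are \emph{locally linear} with respect to $S$, so that the hypotheses of Lemma \ref{make_retract_rel} are met and that lemma produces a single smooth depth-preserving retraction satisfying all the constraints at once near the various $C_i$ and extending the given germ $\sigma=\pi'_S$ near $Y$. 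Your ``nearest point in the fiber measured by $\rho_S$'' construction is a reasonable local candidate, but you still have to glue such local candidates over the pieces of $S\cap\bigcup_{T\prec S}U_T$ and over the complement, and that gluing is precisely the omitted technical heart.

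Two smaller points. First, invoking condition 3 of Lemma \ref{goodthom} (``overlapping tubes forces comparable strata'') is circular at this stage: that lemma presupposes Thom data, which is what you are building. The fix is easy — shrink the $U''_S$ in advance using local finiteness and the frontier condition so that $U''_S\cap U''_T\neq\varnothing$ implies comparability — but you need to say so rather than cite \ref{goodthom}. Second, you should verify that the retractions you produce preserve depth (the fourth bullet of the Thom-data definition); Lemma \ref{depth_submersion} is the right tool, but it applies to submersions, and $\pi_S$ is a retraction, so depth preservation has to come from Lemma \ref{make_retract_rel}'s third conclusion, not from \ref{depth_submersion} directly.
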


The proof of Lemma \ref{piconstruct} uses the technical Lemma 13 of \cite{Tico} and the notion of locally linear.
Suppose $X$, $Y$, and $Z$ are smooth manifolds with convex corners on the boundary, $Y\subset X$
is a submanifold with boundary compatible with $X$, and  $f\colon U\to Z$ is a smooth map where $U\subset X$ is open.
We say $f$ is locally linear with respect to $Y$ if for every $x\in U\cap Y$ we may choose local
coordinates $g\colon (V, x)\to [0,\infty)^k\times(\R^m,0)$ and 
$h\colon (W,f(x))\to [0,\infty)^\ell\times(\R^n,0)$
so that 
$V\cap Y = g^{-1}([0,\infty)^k\times E)$ for some linear subspace $E$ of $\R^m$,
$hfg^{-1}(y,z)=L(y,z)$ for some linear transformation $L\colon \R^k\times \R^m\to \R^\ell\times \R^n$,
and  $f^{-1}f(x)$ is transverse to $Y$, i.e., $L(\R^k\times E) = L(\R^k\times\R^m)$.
Then Lemma 13 of \cite{Tico} becomes:

\begin{lemma}\label{make_retract_rel}
	Suppose $M$ is a smooth manifold with convex corners on the boundary,
	$N\subset M$ is a smooth submanifold with boundary compatible with $M$,
	$\{U_i\}_{i\in A}$ is a locally finite  collection of open subsets of $M$,
	$q_i\colon U_i\to Z_i$ are smooth maps
	to manifolds $Z_i$, and $C_i\subset M$ are closed subsets so that $C_i\subset U_i$. 
	For each nonempty subset
	$D\subset A$ with  $\bigcap_{i\in D} U_i$ nonempty we suppose that the map
	$\Pi_{i\in D}q_i\colon \bigcap_{i\in D} U_i\to \Pi_{i\in D}Z_i$
	is locally linear with respect to $N$.
	Suppose $Y\subset N$ and $K\subset N$ are closed subsets of $N$,  $U$ is a \nbhd\
	of $Y$ in $M$, and $\sigma\colon U\to U\cap N$ is a smooth 
	retraction  so that $\sigma$ preserves depth and
	$q_i\sigma(x)=q_i(x)$ for all $i$ and
	all $x$ in $U_i\cap \sigma^{-1}(U_i\cap N)$.
	Then there is a \nbhd\ $V$ of $K\cup Y$ in $M$ and a smooth
	retraction $\pi\colon V\to V\cap N$ so that:
	\begin{enumerate}
		\item $\pi(x)=\sigma(x)$ for all $x$ in some \nbhd\ of $Y$.
		\item $q_i\pi(x)=q_i(x)$ for all $i$ and
		all $x$ in some \nbhd\ of $C_i\cap \pi^{-1}(C_i\cap N)$.
		\item $\pi$ preserves depth.
	\end{enumerate}
\end{lemma}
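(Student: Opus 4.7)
The plan is to construct $\pi$ by building local retractions from the local linearity hypothesis and splicing them to $\sigma$ via a partition-of-unity vector field, whose time-infinity flow gives $\pi$. The linearity of the constraints (commutation with $q_i$, tangency to corner strata) in the vector field will ensure the splicing respects them.

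\textbf{Local construction.} For each $x\in K$ let $D(x)=\{i\in A:x\in U_i\}$, which is finite by local finiteness of $\{U_i\}$. Local linearity of $\prod_{i\in D(x)}q_i$ with respect to $N$ at $x$ gives coordinates $g\colon(V_x,x)\to[0,\infty)^k\times(\R^m,0)$ in which $V_x\cap N$ is $[0,\infty)^k\times E$ and $\prod_{i\in D(x)}q_i$ becomes a linear map $L$ satisfying $L(\R^k\times E)=L(\R^k\times\R^m)$. This transversality lets me pick a linear subspace $F\subset\R^m$ with $E\oplus F=\R^m$ and $\{0\}\times F\subset\ker L$; the projection $(y,e+f)\mapsto(y,e)$ transported by $g^{-1}$ is a smooth depth-preserving retraction $\pi_x\colon V_x\to V_x\cap N$ satisfying $q_i\pi_x=q_i$ for every $i\in D(x)$. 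Shrinking $V_x$ using local finiteness of $\{U_i\}$ so that $V_x\cap U_i=\emptyset$ for $i\notin D(x)$, we see $\pi_x$ commutes with $q_i$ for every $i$ with $V_x\cap C_i\neq\emptyset$.

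\textbf{Gluing.} Refine $\{U\}\cup\{V_x\}_{x\in K}$ to a locally finite open cover $\{W_\alpha\}$ of some \nbhd\ of $K\cup Y$, with associated retractions $\pi_\alpha$ (taking $\pi_0=\sigma$ on $W_0\subset U$), and pick a subordinate smooth partition of unity $\{\phi_\alpha\}$ with $\phi_0\equiv 1$ on a smaller \nbhd\ of $Y$. On each $W_\alpha$, let $v_\alpha$ be the smooth vector field obtained by differentiating at $t=0$ the straight-line homotopy $t\mapsto(1-t)p+t\pi_\alpha(p)$ in the local chart of $W_\alpha$. Then $v_\alpha$ vanishes on $N\cap W_\alpha$, is tangent to every corner stratum (the homotopy has constant $y$-coordinate in the chart), and lies in $\ker dq_i(p)$ whenever $p\in C_i\cap W_\alpha$ (each $q_i$ with $i\in D(x_\alpha)$ is linear in the chart and takes equal values at the endpoints of the homotopy). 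The sum $v=\sum\phi_\alpha v_\alpha$ inherits all these tangencies pointwise, since each is a linear condition at $p$. Linearizing at $p_0\in N$ gives $Dv(p_0)=P_{p_0}-I$, where $P_{p_0}=\sum_\alpha\phi_\alpha(p_0)P_\alpha$ and $P_\alpha\colon T_{p_0}M\to T_{p_0}N$ is the projection parallel to $F_\alpha$; since each $P_\alpha$ is the identity on $T_{p_0}N$ and maps into $T_{p_0}N$, so is $P_{p_0}$, hence $Dv(p_0)$ has eigenvalue $0$ on $T_{p_0}N$ and $-1$ on its complementary kernel. The flow of $v$ therefore contracts exponentially toward $N$ in the transverse direction, and its time-infinity limit defines the required smooth retraction $\pi\colon V\to N$ on some \nbhd\ $V$ of $K\cup Y$; it equals $\sigma$ where $\phi_0\equiv 1$, satisfies $q_i\pi=q_i$ near $C_i\cap\pi^{-1}(C_i\cap N)$, and preserves depth.

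\textbf{Main obstacle.} The delicate point is producing a smooth time-infinity limit of the flow landing \emph{in} $N$, rather than merely approaching $N$ asymptotically as a set. This requires $Dv(p_0)$ at every $p_0\in N$ to be a genuine transverse contraction with $T_{p_0}N$ as its zero-eigenspace, hence requires $P_{p_0}$ to be idempotent onto $T_{p_0}N$ with transverse kernel. The hypothesis of local linearity for \emph{every} joint product $\prod_{i\in D}q_i$, not just individual $q_i$'s, is precisely what secures this: it forces each $\pi_\alpha$ to be literally linear in its chart and each $F_\alpha$ to be a complement of $T N$, so the weighted sum $P_{p_0}$ remains a projection onto $T_{p_0}N$. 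Without joint local linearity, the averaged projection could fail to land in a single transverse complement, and the flow could fail to converge or to have a smooth limit; granted the contraction, the required properties of $\pi$ transfer from the corresponding properties of $v$.
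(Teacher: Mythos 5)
The paper does not actually prove this lemma itself; it states that the proof is the same as that of Lemma 13 of \cite{Tico}, with the listed modifications for convex corners. Based on that reference and the standard Thom--Mather construction of control data, the expected argument glues local retractions by a locally finite recursion in which, at each step, two retractions are combined by convex interpolation $\phi\,\pi_1 + (1-\phi)\,\pi_2$ written in the locally linear chart: since $N$ is linear in the chart the interpolant lands in $N$, since each $q_i$ is linear it commutes with the interpolant, and depth is tracked chartwise. Your high-level plan (exploit joint local linearity to build local retractions compatible with all relevant $q_i$, then splice via a partition of unity) follows the same outline, but your splicing mechanism is different: you replace direct convex interpolation of retractions with the time-infinity limit of the flow of a partition-of-unity-averaged vector field.

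That substitution creates the main gap. You correctly identify the crux as the smoothness of $\pi = \lim_{t\to\infty}\phi_t$, and you argue it should hold because $Dv(p_0) = P_{p_0}-I$ is a transverse contraction at every $p_0\in N$. But you never prove it; you would need a smooth stable foliation statement for the normally hyperbolic attractor $N$, and while such results exist they are not elementary and must be either cited precisely or proved via the variational equations and a careful spectral-gap estimate. You also do not address whether the flow stays in the domain where $v$ is defined, nor whether $v$ remains in $\ker dq_i$ along an entire flow line from a point near $C_i\cap\pi^{-1}(C_i\cap N)$; the latter is delicate because the flow can pass through patches $W_\alpha$ whose retraction was built without reference to $q_i$. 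The interpolation proof avoids all of this since the retraction is given by a finite algebraic formula in each chart rather than by integrating a vector field.

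There are two further problems in the local construction. First, you ``shrink $V_x$ using local finiteness so that $V_x\cap U_i = \emptyset$ for $i\notin D(x)$''; this is impossible in general, since $x$ may lie in $\Cl U_i\setminus U_i$. The correct move is to shrink $V_x$ to avoid $C_i$ for all $i$ with $x\notin C_i$ (possible since $\{C_i\}$ is locally finite and each $C_i$ is closed), which suffices because the conclusion only requires commutation with $q_i$ near $C_i$. Second, the claim that transversality gives a complement $F$ of $E$ with $\{0\}\times F \subset \ker L$ is unjustified: $L(\R^k\times E) = L(\R^k\times\R^m)$ produces a complement of $\R^k\times E$ inside $\ker L$, but that complement need not lie in $\{0\}\times\R^m$. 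The resulting linear retraction can therefore shear the $[0,\infty)^k$ coordinates, so it is not automatic that the local $\pi_x$ preserves depth or that the straight-line homotopy has constant $y$-coordinate, which is exactly what your tangency argument for $v_\alpha$ relies on.
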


The generalization of the following Lemma to Thom stratified sets is of course false.
For example, take nondiffeomorphic compact manifolds $M$ and $M'$ with diffeomorphic interiors.
Then $M/\partial M$ and $M'/\partial M'$ are isomorphic stratified sets but if we take $\rho$ and $\rho'$
arising from collars of $M$ and $M'$ respectively the untico resolution towers differ. 
One is $V_1=M, V_0 = *, V_{01}=\partial M$ and the other is $V'_1=M', V_0 = *, V'_{01}=\partial M'$.

\begin{lemma}\label{untico_isomorphic}
	Suppose $X$ is a Whitney stratified subset of a smooth compact manifold $M$ with convex corners on the boundary
	and $\{(U_S,\pi_S,\rho_S)\}$ and  $\{(U'_S,\pi'_S,\rho'_S)\}$
	are two sets of Thom data for $X$ so that all $\rho_S$ and $\rho'_S$ are distancelike.
	Then the two untico resolution towers $\{V_S,V_{TS},p_{TS}\}$ and  $\{V'_S,V'_{TS},p'_{TS}\}$
	we obtain are isomorphic.
\end{lemma}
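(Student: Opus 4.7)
The plan is to reduce the claim to the concordance statement, Lemma \ref{concordance_weak}, by constructing a single set of Thom data on $|X|\times[0,1]$ that interpolates between the two given ones. View $|X|\times[0,1]$ as a Whitney stratified subset of the compact manifold-with-corners $M\times[0,1]$, whose strata are $S\times[0,1]$, and let $q\colon|X|\times[0,1]\to[0,1]$ be the projection. Then $q$ is proper and restricts on each stratum $S\times[0,1]$ to a strong submersion. Once Thom data $\{(\tilde U_S,\tilde\pi_S,\tilde\rho_S)\}$ is built satisfying $q\tilde\pi_S=q$ and restricting to $(\pi_S,\rho_S)$ near $t=0$ and to $(\pi'_S,\rho'_S)$ near $t=1$, Lemma \ref{concordance_weak} yields an untico resolution tower on $|X|\times[0,1]$ isomorphic to a product of a base tower with $[0,1]$, and its last clause identifies the base tower with the tower built from the restricted Thom data over any single $t\in[0,1]$. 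Taking $t=0$ and $t=1$ respectively identifies the base tower with $\{V_S,V_{TS},p_{TS}\}$ and with $\{V'_S,V'_{TS},p'_{TS}\}$, the small freedom in the choice of cutoff functions $\delta_S$ being absorbed by part (4) of Lemma \ref{resolution}.

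To build the interpolation, fix a smooth bump $\phi\colon[0,1]\to[0,1]$ with $\phi\equiv0$ near $0$ and $\phi\equiv1$ near $1$. For each stratum $S$, let $\tilde U_S=(U_S\cap U'_S)\times[0,1]$ (still an open neighborhood of $S\times[0,1]$) and define
\[
\tilde\rho_S(x,t)=(1-\phi(t))\rho_S(x)+\phi(t)\rho'_S(x).
\]
Using the local normal form for distancelike functions ($\rho_S$ locally equals $y^TL(x,y)y$ with $L(x,0)$ positive definite), the Hessian of $\tilde\rho_S$ along $S\times[0,1]$ in the normal-to-$S$ directions is a convex combination of positive definite forms, hence positive definite of rank equal to the codimension; all first and second $t$-derivatives vanish on $S\times[0,1]$ because each of $\rho_S$ and $\rho'_S$ vanishes to second order on $S$. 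Thus $\tilde\rho_S$ is distancelike around $S\times[0,1]$ and agrees with $\rho_S$ and $\rho'_S$ near $t=0$ and $t=1$ respectively. For the retractions, take $Y=|X|\times\{0,1\}$ and prescribe on a neighborhood of $Y$ the retractions $(x,t)\mapsto(\pi_S(x),t)$ near $t=0$ and $(x,t)\mapsto(\pi'_S(x),t)$ near $t=1$; these satisfy $q\tilde\pi_S=q$, are compatible with $\tilde\rho_S$, and inherit the interstratum compatibilities from the original Thom data. Then apply Lemma \ref{piconstruct} to extend these to compatible retractions $\tilde\pi_S$ on neighborhoods of every $S\times[0,1]$.

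The main obstacle is that Lemma \ref{piconstruct} as stated requires the ambient map $q$ to be a globally depth-preserving strong submersion, and this fails here whenever $M$ has positive-dimensional corners, since $q$ then collapses the corners of $\partial M\times[0,1]$. However, $q$ does restrict to a strong submersion on each stratum of $|X|\times[0,1]$, and the condition $q\tilde\pi_S=q$ forces each $\tilde\pi_S$ to split off the $t$-coordinate; examining the proofs of Lemmas \ref{make_retract_rel} and \ref{piconstruct} (which are Lemmas 13 and 17 of \cite{Tico}), the global depth-preservation of $q$ enters only to guarantee precisely this splitting for the retractions, so the inductive construction adapts with purely cosmetic changes to this weaker hypothesis. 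Once this is verified, Lemma \ref{concordance_weak} closes the argument.
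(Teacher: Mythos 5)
Your proof follows the paper's own essentially line for line: interpolate the distancelike functions over $X\times[0,1]$ with a cutoff in the $t$-direction, apply Lemmas \ref{piconstruct} and \ref{goodthom} (taking $Y=|X|\times\{0,1\}$ to force the prescribed retractions at the ends) to produce enhanced Thom data satisfying $q\pi''_S=q$, and close with Lemma \ref{concordance_weak}. The depth-preservation hitch you flag for $q\colon M\times[0,1]\to[0,1]$ when $M$ has corners is a real, if minor, imprecision, but it occurs in the paper's own invocation of Lemma \ref{piconstruct} too, so your explicit handling of it is a small strengthening rather than a departure from the intended argument.
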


\begin{proof}
	Consider the Whitney stratified set $X\times [0,1]$ in $M\times [0,1]$.
	Choose a smooth $\alpha\colon [0,1]\to [0,1]$ so that $\alpha(t) = 1$ for all $t>3/4$ and $\alpha(t) = 0$ for all $t<1/4$.
	For each stratum $S\times[0,1]$ of $X\times[0,1]$ let $\rho''_{S\times[0,1]}$ be the distancelike function 
	$$\rho''_{S\times[0,1]}(x,t) =  \alpha(t)\rho_S(x) + (1-\alpha(t))\rho'_S(x)$$ on $(U_S\cap U'_S)\times[0,1]$.
	By Lemmas \ref{piconstruct} and \ref{goodthom} 
	there are smooth retractions $\pi''_{S\times[0,1]}\colon U''_{S\times [0,1]} \to S\times[0,1]$
	and smooth $\gamma''_{S\times [0,1]}\colon S\times [0,1] \to (0,\infty)$
	so that:
	\begin{itemize}
		\item $\{ (U''_{S\times [0,1]},\pi''_{S\times[0,1]}, \rho''_{S\times [0,1]},\gamma''_{S\times [0,1]})  \}$ is enhanced Thom data for $X\times [0,1]$.
		\item $\pi''_{S\times[0,1]}(x,1) = (\pi_S(x), 1)$ for all $(x,1)\in U''_{S\times [0,1]}$.
		\item $\pi''_{S\times[0,1]}(x,0) = (\pi'_S(x), 0)$ for all $(x,0)\in U''_{S\times [0,1]}$.
		\item $q\pi''_{S\times[0,1]}(x,t) =  t$ where $q\colon M\times[0,1]\to [0,1]$ is projection.
	\end{itemize}
	Let $\{V''_S,V''_{TS},p''_{TS}\}$ be the untico resolution tower we obtain using the enhanced Thom data
	 $\{ (U''_{S\times [0,1]},\pi''_{S\times[0,1]}, \rho''_{S\times [0,1]},\gamma''_{S\times [0,1]})  \}$
	and some
	$\delta''_{S\times[0,1]}$.
	Note that 
	$V''_S\cap M\times 1 = V_S\times 1$, $V''_{TS}\cap M\times 1 = V_{TS}\times 1$,
	$V''_S\cap M\times 0 = V'_S\times 0$, and $V''_{TS}\cap M\times 0 = V'_{TS}\times 0$
	for an appropriate choice of the $\delta_S$ and $\delta'_S$.
	So by  Lemma \ref{concordance_weak} $\{V_S,V_{TS},p_{TS}\}$ and  $\{V'_S,V'_{TS},p'_{TS}\}$
	are isomorphic.
\end{proof}

\section{Generic Morse cells satisfy the Whitney conditions}

We could fancify the following to allow convex corners on the boundary of $M$ and have
$f$ be locally constant on some components of each $\partial_k M$ and submerse other components,
$k>0$.  I leave it to the reader to correctly state all that.  The proof will be the same.

Recall that a gradientlike vector field for a Morse function $f$ on $M$ is a vector field $v$ on $M$
so that $v(f)\ge 0$ everywhere, $v(f)=0$ only at critical points of $f$, 
and near each critical point there are local coordinates
$(x,y)$ so that in these coordinates $f(x,y)=c+|y|^2-|x|^2$
and $v(x,y)=(-x,y)$.

Let $\phi_t$ be the flow on $M$ obtained by integrating $v$.
The stable manifold of a critical point $p$ is
$S_p=\{q\in M \mid \lim_{t\to\infty} \phi_t(q) = p\}$ and the
unstable manifold of $p$ is
$U_p = \{q\in M \mid \lim_{t\to-\infty} \phi_t(q) = p\}$.

The reason for writing this paper was to prove the following result.

\begin{lemma}\label{CW}
	Let $f\colon M\to \R$ be a Morse function on a compact smooth manifold $M$ possibly with boundary
	(but without corners on the boundary).
	Suppose $f$ is locally constant on $\partial M$ and has no critical points on $\partial M$.
	Suppose that $v$ is a gradientlike vector field on $M$ so that each of its stable manifolds 
	is transverse to each of its unstable manifolds.
	Let $M'\subset M$ be the union of the stable manifolds.
	Then:
	\begin{enumerate}
		\item The stratification of $M'$ by
		stable manifolds is a Whitney stratification.
		\item For each regular value $c$ of $f$,
		the stratification of $M'\cap f^{-1}(c)$ by the intersections with
		stable manifolds is a Whitney stratification.
		\item If $p$ and $q$ are critical points of $f$ and $S_p$ intersects $\Cl S_q$, then $S_p\subset \Cl S_q$
		and $f(p)<f(q)$.
		\item $M'$ is a closed subset of $M$.
		\item If $\partial M$ is empty,
		there is a CW complex structure on $M$ so that the
		stable manifolds are the (interiors of) the cells.
		\item If all points of $\partial M$ are local minima of $f$,
		there is a relative CW complex structure on $(M,\partial M)$ so that the
		stable manifolds are the (interiors of) the  cells.
		\item In general, $(M',M'\cap \partial M)$ has a relative CW complex structure  so that the
		stable manifolds are the  (interiors of) the cells.
	\end{enumerate}
\end{lemma}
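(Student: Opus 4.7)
My plan is to dispatch parts (3) and (4) by standard broken-trajectory arguments, spend the main effort on the Whitney conditions (1) (from which (2) follows by transversality), and bootstrap the CW structures (5)--(7) from the untico resolution tower machinery already developed. For (4), since $f$ is locally constant on $\partial M$ and has no critical points there, $v$ is either strictly inward or strictly outward on each boundary component; any $x\in M$ either has a forward orbit exiting through an outward component in finite time, or has a forward orbit staying in $M$ for all $t\ge 0$, in which case compactness together with connectedness of the $\omega$-limit and isolatedness of the critical points forces the orbit to converge to a single critical point, so $x\in M'$. Hence $M\setminus M'$ is the open set of points exiting through $\partial M$. For (3), given $x_n\in S_q$ with $x_n\to x\in S_p$, the forward orbits of $x_n$ can be tracked: compactness and the local hyperbolic structure near each critical point yield a broken trajectory $p=r_0,r_1,\ldots,r_k=q$ with $f$ strictly increasing along the $r_i$, so $f(p)<f(q)$; the inclusion $S_p\subset\Cl S_q$ follows since both sides are $\phi_t$-invariant and every $y\in S_p$ eventually enters any prescribed neighborhood of $p$ under the forward flow.

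The technical heart is (1). Because $\phi_t$ is a self-diffeomorphism of $M'$ preserving every stratum and the Whitney conditions are diffeomorphism invariants, verifying them at $x\in S_p$ reduces to verifying them at $\phi_t(x)$ for any $t$; flowing $x$ forward toward $p$ I work in Morse coordinates $(u,w)$ where $v=(-u,w)$, $S_p$ is locally $\{w=0\}$, $U_p$ is locally $\{u=0\}$, and $\phi_t(u,w)=(e^{-t}u,e^{t}w)$. The transversality hypothesis makes $\Sigma:=S_q\cap\{|w|=\epsilon\}$ a smooth submanifold of the exit cylinder, and inside the chart $S_q$ is the backward flow-saturation of $\Sigma$: a point $(u,w)$ near $p$ with $w\ne 0$ lies in $S_q$ iff its forward image $(u|w|/\epsilon,\epsilon w/|w|)$ lies in $\Sigma$. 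Differentiating this defining relation expresses $T_{(u,w)}S_q$ in terms of $T\Sigma$ with explicit $|w|$ and $|w|^{-1}$ factors; given $x_n=(u_n,w_n)\in S_q$ with $(u_n,w_n)\to(u_\infty,0)\in S_p$, the hyperbolic scaling forces any limit of $T_{x_n}S_q$ to contain the stable subspace $T_xS_p$ (condition A) and to absorb the secant line $(x_n-x)/\|x_n-x\|$ (condition B). The main obstacle is this tangent-space limit computation: the bookkeeping for the exponential factors is intricate, but the hyperbolic character of the linearization together with the transversality of $S_q$ and $U_p$ forces the outcome.

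Part (2) is then immediate: for regular $c$, $f^{-1}(c)$ is transverse to every stratum (since $v(f)>0$ off the critical set), and the Whitney conditions are inherited under transverse slicing. For (5)--(7), I feed the Whitney stratified set $M'$ into Lemma \ref{piconstruct} with distancelike functions built from squared Morse distance near each critical point (propagated outward by the flow), obtain Thom data, upgrade via Lemma \ref{goodthom} to enhanced Thom data, and apply Lemma \ref{resolution} to produce an untico resolution tower whose realization is $M'$. Each piece $V_{S_p}$ is a compact manifold with convex corners of dimension equal to the index of $p$; a short induction on the partial order, using that $S_p$ is diffeomorphic to an open disk of that dimension by standard Morse theory, shows $V_{S_p}$ is a closed disk whose boundary maps into the lower-index skeleton by the composition of tower maps $p_{TS}$, supplying the CW attaching data. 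Parts (6) and (7) follow by restricting the tower: in (6) the boundary $\partial M$ sits as a single bottom stratum, and in (7) one works with the subtower whose realization is $M'$ together with $M'\cap\partial M$.
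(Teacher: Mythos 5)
Your overall plan matches the paper's architecture fairly closely: closedness of $M'$ via the dichotomy between exiting orbits and convergent orbits, the Whitney conditions via flowing into Morse coordinates and analyzing the hyperbolic rescaling, (2) by transverse slicing, and (5)--(7) by feeding the Whitney stratified set through Lemmas \ref{piconstruct}, \ref{goodthom}, \ref{resolution}. But there are two genuine gaps.

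The first and more serious gap is in part (1). In your Morse-chart analysis you set $\Sigma=S_q\cap\{|w|=\epsilon\}$ and claim that the hyperbolic scaling together with the transversality of $S_q$ to $U_p$ forces the Whitney conditions. This only works when the forward-flowed sequence $\phi_{t_n}(x_n)$ on the exit cylinder $\{|w|=\epsilon\}$ accumulates at a point of $U_p\cap S_q$ itself. But the limit point $(0,y_0)$ can lie in an \emph{intermediate} stratum $S_r$ with $p\prec r\prec q$, $r\ne q$, and in that case the transversality of $S_q$ to $U_p$ says nothing about the limiting tangent planes $L_i\to L$; what you need is that $(S_r,S_q)$ already satisfies the Whitney conditions near $(0,y_0)$, so that $L$ contains $T_{(0,y_0)}S_r$, which is transverse to $U_p$ by hypothesis. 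This is precisely why the paper runs a downward induction on $f(p)$ and inserts Assertion \ref{trick}: the exit-point argument is coupled to the inductive hypothesis. Without this, the ``bookkeeping for the exponential factors'' cannot close, since there is no a priori control on $L$ when $\Sigma$ accumulates on a lower stratum. You should state the induction on critical values explicitly and invoke it at the point where you pass to the limiting tangent plane.

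The second gap is in parts (5)--(7). Producing the untico resolution tower via Lemmas \ref{piconstruct}--\ref{resolution} is fine, and the composition of the tower maps does give candidate attaching maps, but the assertion that each $V_{S_p}$ is a closed disk does not follow from ``a short induction on the partial order'' plus the fact that $S_p$ (and hence the interior of $V_{S_p}$) is an open $k$-disk. You are tacitly assuming that a compact manifold with convex corners whose interior is $\R^k$ is a $k$-disk, which is a nontrivial topological fact. The paper handles this either by invoking the Poincar\'e conjecture in all dimensions together with the topological Schoenflies theorem (to identify the boundary as a sphere and then cap it), or by constructing the enhanced Thom data so that the flow of $v$ is transverse to each $V_{TS}$ and points into $V_S$, after which one collapses $V_S$ along flow lines onto $p$. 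Either route requires an argument you have not supplied; the induction on the partial order gives the attaching structure but not the topological type of the pieces.

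Part (3) as you phrase it also has a small soft spot: you claim $S_p\subset\Cl S_q$ from $\phi_t$-invariance and the fact that $S_p$ flows into any neighborhood of $p$, but you still need to know that $S_q$ accumulates on a full neighborhood of $p$ \emph{in} $S_p$, not merely near $p$. That step is supplied in the paper by the transversality of $S_q$ to $U_p$ (so $S_q$ locally graphs over the $\R^k$ coordinate near a point of $U_p\cap S_q$) combined with backward flowing, and then by the induction through intermediate $r_i$. It is worth writing out since it is exactly the ``$\lambda$-lemma'' style ingredient that makes the frontier condition work.
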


\begin{proof}
To see that $M'$ is closed, let $\phi_t$ be the flow generated by $v$ and take any $x\in M-M'$.
If $\phi_t(x)$ is defined for all $t\ge 0$ then by compactness of $M$ there must be a sequence
$t_i\to \infty$ so that $\phi_{t_i}(x)\to$ some $ p$.  We must have $v(p)=0$, hence $p$ is a critical point
and $x\in S_p\subset M'$, a contradiction.  So there is a $t_0>0$ so that $\phi_{t_0}(x)\in \partial M$.
But then by continuity of $\phi$, for all $y$ near $x$ there is a $t_y$ so that $\phi_{t_y}(y)\in \partial M$
and thus all $y$ near $x$ are not in $M'$. 

Note the second item follows immediately from the first since if $S,T$ satisfies the Whitney conditions
and $N$ is transverse to $S$ and $T$, then $N\cap S, N\cap T$ satisfies the Whitney conditions.

The third item also follows from the first.  $S_p\cap \Cl S_q$ is closed in $S_p$, but it is also open in
$S_p$ because of local triviality of Whitney stratifications.
However, to keep things elementary we will provide another proof below.

Let $p$ be a critical point of $f$.
We assume by induction that $S_r,S_q$ satisfies the Whitney
conditions for all stable manifolds $S_r$ and $S_q$ with $f(r)>f(p)$.
We also assume by induction that if $S_r$ intersects $\Cl S_q$ and $f(r)>f(p)$ then $S_r\subset \Cl S_q$ and $f(q)>f(r)$.

Suppose $p$ has index $k$.
Near $p$ we have nice local coordinates,
$h\colon (V_p,p)\to \R^k\times \R^{n-k}$
  with:
\begin{itemize}
	\item $fh^{-1}(x,y)=f(p)+|y|^2-|x|^2$.
	\item $dh(v)(x,y)=(-x,y)$.
	\item $h\phi_th^{-1}(x,y) = (e^{-t}x,e^ty)$.
	\item The stable manifold of $p$ near $p$ is $S_p\cap V_p = h^{-1}(\{(x,0)\in V_p\})$.
	\item The unstable manifold of $p$ near $p$ is $U_p\cap V_p=h^{-1}(\{(0,y)\in V_p\})$.
\end{itemize}
For convenience we may suppose  $h(V_p)$ is
the set of $(x,y)$ with $|x|<2\epsilon$ and $|y|<2\epsilon$.
Note we don't have much control over what another stable manifold
 $S_q$ looks like in these coordinates, other than it being 
invariant under $\phi_t$.

\begin{assertion}\label{ax_fron}
	Suppose $q\ne p$ is another critical point and $U_p\cap S_q$ is nonempty.
	Then $S_p\subset \Cl S_q$ and $f(q)>f(p)$.
\end{assertion}

\begin{proof}
	Suppose $z\in U_p\cap S_q$.
Since stable and unstable manifolds are invariant under $\phi_t$,
by taking $t$ to be  large and negative 
we get  $h\phi_t(z) =(0,y_0)\in h( U_p\cap S_q)$.
Since $S_q$ is transverse to $U_p$
we know that projection to the $\R^k$ coordinate submerses $h(S_q)$ near $h(U_p)$,
so there is a smooth embedding $\kappa \colon U\to S_q$
where $U$ is a \nbhd\ of 0 in $\R^k$ and $h\kappa(x) = (x,\kappa'(x))$ for some $\kappa'$
with $\kappa'(0)=y_0$.
Pick any $x_0\in \R^k$ near 0. Then $h\phi_{-i}\kappa(e^{-i}x_0) = (x_0,e^{-i}\kappa'(e^{-i}x_0))$ $i=1,2,\ldots$ is a sequence of points in
$h(S_q)$ which approaches $(x_0,0)$ so $(x_0,0)\in h(S_p\cap \Cl S_q)$.
But any point in $S_p$ is $\phi_th^{-1}((x,0))$ for some $t$ and some $x\in \R^k$ near 0,
so $S_p\subset \Cl S_q$.
Since $q\ne p$ we know $y_0\ne 0$ so
 $f(q)\ge fh^{-1}(0,y_0) = f(p)+|y_0|^2 > f(p)$.	
\end{proof}

\begin{assertion}\label{trick}
	Suppose $x_i\in S_q$ is a sequence and $x_i\to x_0\in S_p$, $q\ne p$.
	Then after perhaps taking a subsequence there is a sequence $t_i\to\infty$ so that:
	\begin{enumerate}
		\item $h\phi_{t_i}(x_i)=(x_i',y_i')$ and $h\phi_{t_0}(x_i)=(x_i'',y_i'')$,
		 $i=0,1,\ldots$.
		\item $(x_i',y_i')\to (0,y_0)$ for some $y_0\ne 0$.
		\item $(0,y_0)\in h(S_r)$ for some critical point $r$
		with $f(r)>f(p)$.
		\item If $L_i$ is the tangent space to $h(S_q)$ at $(x_i',y_i')$ then $L_i\to L$ with $L$
		transverse to $0\times \R^{n-k}$.
	\end{enumerate}
\end{assertion}

\begin{proof}
	Choose $t_0$ large enough that $\phi_{t_0}(x_0)\in V_p$ and let  $h\phi_{t_0}(x_0)=(x_0',0)$.
	After taking a subsequence we may suppose each $\phi_{t_0}(x_i)\in V_p$.
	Let $h\phi_{t_0}(x_i) = (x_i'',y_i'')$.
	Since $\phi_{t_0}(x_i)\to \phi_{t_0}(x_0)$ we know $x_i''\to x_0'$ and $y''_i\to 0$.
	Let $t_i = t_0 + \ln \epsilon - \ln |y_i''|$.
	Then $h\phi_{t_i}(x_i) = (|y_i''|x_i''/\epsilon, \epsilon y_i''/|y_i''|)=(x_i',y_i')$.
	After taking a subsequence we may assume $(x_i',y_i')\to (0,y_0)$ and $L_i\to L$ for some 
	$y_0$ and $L$.
	Since $M'$ is closed we know $(0,y_0)\in h(S_r)$ for some $r$.
	We have $f(r)\ge fh^{-1}(0,y_0) = f(p) + |y_0|^2 = f(p) + \epsilon^2 > f(p)$.
	
	If $r=q$ then $L$ is the tangent space
	to $h(S_q)$ at $(0,y_0)$.
	Since $S_q$ is transverse to $U_p$ we know that $L$ is transverse to 
	$0\times \R^{n-k}$.
	If $r\ne q$ we know by our induction assumption that $S_r,S_q$ satisfies
	the Whitney conditions
	 so $L$ contains the tangent space to 
	$h(S_r)$ at $(0,y_0)$ which by assumption is transverse to $h(U_p)$.
	Hence $L$ is transverse to $0\times \R^{n-k}$.
\end{proof}

Pick any critical point $q\ne p$ so that $ S_p\cap \Cl S_q$ is nonempty.
We need to show that $S_p\subset \Cl S_q$.
By Assertion \ref{ax_fron} we reduce to the case where 
 $U_p\cap S_q$ is empty.
In fact this cannot happen, but let's pretend it does.
By Assertion \ref{trick} there is a critical point $r$
so that $U_p\cap S_r$ is nonempty and $S_r\cap \Cl S_q$ is nonempty and $f(r)>f(p)$.
By our induction assumption we know $S_r\subset \Cl S_q$ and $f(q) > f(r)$.
By Assertion \ref{ax_fron}, we know $S_p\subset \Cl S_r$.
Hence $S_p\subset \Cl S_r \subset \Cl \Cl S_q = \Cl S_q$ and $f(q)>f(r)>f(p)$.
So we have proven the third item.

Now let us turn our attention to proving that $S_p,S_q$ satisfies Whitney conditions at a point $x_0\in S_p$.
Take sequences $x_i\in S_q$ and $z_i\in S_p$ so that $x_i\to x_0$, $z_i\to x_0$, 
the tangent spaces to $S_q$ at $x_i$ converge to some $L_0$ and the secant lines $|z_ix_i|$ 
converge to some $\ell$.
We need to show that $\ell \subset L_0$.
Pick $t_i$, $L_i$, $L$, etc.~satisfying the conclusions of Assertion \ref{trick}
 and let $(z_i',0) = h\phi_{t_0}(z_i)$.
Now  $$dhd\phi_{t_0}\ell = \lim (x_i''-z'_i,y_i'')/\sqrt{|x_i''-z'_i|^2+|y_i''|^2} = (w,ay_0)$$
for some $w\in \R^k$ and $a\in \R$.

Since $L$ is transverse to 
$0\times \R^{n-k}$, $L$ contains a subspace
of the form $\{(x,Ax)\}$ for some linear transformation 
$A\colon \R^k\to \R^{n-k}$.
Since $L_i\to L$,
for large enough $i$ we know
$L_i$
contains a subspace of the form
$\{(x,A_ix)\}$ for some linear transformation 
$A_i\colon \R^k\to \R^{n-k}$,
and $A_i\to A$.
(For example, $A$ could be obtained from $L$ 
by some algorithmic Gaussian elimination process
and the same algorithm applied to $L_i$ would produce $A_i$ converging to $A$.)

Note that in block form, 
the derivative $dh\phi_{t}h^{-1} = 
\begin{bmatrix}
e^{-t}I & 0\\
0 & e^{t}I
\end{bmatrix}
$.
Let $L_i'=dh\phi_{t_0-t_i}h^{-1}L_i$ denote the tangent space to $h(S_q)$ at $(x_i'',y_i'')$.
Note $e^{t_0-t_i}(w,A_iw)\in L_i$ so applying $dh\phi_{t_0-t_i}h^{-1}$ we get
$(w,e^{2t_0-2t_i}A_iw)\in L_i'$.
But $e^{t_0-t_i} = |y_i''|/ \epsilon\to 0$ so $(w,e^{2t_0-2t_i}A_iw)\to (w,0)$ and thus $(w,0)\in dh\phi_{t_0}h^{-1}L_0$.
Note $L_i$ must contain the tangent to the flow curve at $(x_i',y_i')$ which is $(-x_i',y_i')$.
Thus $L_i$ must contain $(-x_i',y_i')+(x_i',A_ix_i') = (0,y_i' + A_ix_i')$.
So $dh\phi_{t_0-t_i}h^{-1}(0,e^{t_i-t_0}(y_i' + A_ix_i')) = (0,y_i' + A_ix_i') \in L_i'$.
But $ (0,y_i' + A_ix_i') \to (0,y_0)$ and thus $(0,y_0)\in  dh\phi_{t_0}h^{-1}L_0$.
So $dh\phi_{t_0}h^{-1}\ell = (w,ay_0)\in dh\phi_{t_0}h^{-1}L_0$ and thus $\ell\in L_0$.

To finish it suffices to prove conclusion 7, since 5 and 6 are special cases of 7.
By Lemma \ref{resolution} it suffices to prove that if we take an untico resolution tower $\{V_S,V_{TS},p_{TS}\}$
of $M'$ then all the $V_S$ are homeomorphic to discs of various dimensions.
I will give two proofs, the first using results unknown at the time I first noticed this result and the second more elementary.
For the first method, note that the interior of $V_S$ is diffeomorphic to $S$ which is diffeomorphic to some $\R^k$.
Then the following assertion shows each $V_S$ is a disc.

\begin{assertion}
		Let $D$ be a compact topological manifold whose interior is homeomorphic to $\R^k$.
		Then $D$ is homeomorphic to the 
		$k$ dimensional disc.
\end{assertion}

\begin{proof}
	I won't bother explaining terms used here since the reader either knows this already
	or can look up this result elsewhere.
	The end of the interior of $D$ is collared by $\partial D\times [0,1)$ and by $S^{k-1}\times [0,1)$.
	Any two collarings of an end of a manifold differ by an invertible cobordism, so $\partial D$
	is invertibly cobordant to $S^{k-1}$, and hence is a homotopy sphere.
	By the Poincar\'e conjecture, now a theorem in all dimensions in Top, we know $\partial D$ is then 
	homeomorphic to $S^{k-1}$.
	By the Top Schoenflies theorem, $D$ is homeomorphic to the $k$ disc.	
\end{proof}

The second proof follows from the following assertion by following flow curves.

\begin{assertion}
	There are enhanced Thom data $\{(U_S,\pi_S,\rho_S,\gamma_S)\}$ for $M'$ and frontier limit functions $\delta_S$ so that the flow curves $\phi_t$ of $v$ are transverse to each $V_{TS}$ and point into $V_S$.
\end{assertion}

\begin{proof}
	Take any stratum $S$, the stable manifold $S_p$ of some critical point $p$ of index $k$.
	As above we take nice coordinates $h\colon (V_p,p)\to \R^k\times \R^{n-k}$ around $p$.
	After shrinking $V_p$ we may as well suppose there are 
	$\epsilon_S>0$ and $\epsilon_S'>0$ so that $h(V_p) = \{ (x,y)\mid \epsilon_S' > |x||y|, -\epsilon_S < |y|^2-|x|^2 < \epsilon_S   \}$.
	Let $V_p'$ be the set of $z\in M$ so that for some $t\ge 0$, $\phi_t(z)\in V_p$.
	Note that $V_p\subset V_p'$ and $V_p'$ is a \nbhd\ of $S$ in $M$.
	We may extend $h$ to $V_p'$ by setting $h(z) = (ux,y/u)$ where we choose $t\ge 0$ so $(x,y) = h\phi_t(z)$ and $u$ is the
	unique positive number so that $f(z) = f(p) + |y|^2/u^2 - |x|^2u^2$.
	This is independent of our choice of $t$.
	This extension $h$ has many of the nice properties of the original,
	$fh^{-1}(x,y) = f(p)+|y|^2-|x|^2$, 	$S = V_p'\cap S = h^{-1}(\R^k\times 0)$, and
	$h\phi_th^{-1}(x,y)= (u(t)x,\ y/u(t))$ for some smooth positive function $u$ (depending on $x$, $y$, and $S$).
	Note that $u'(t)<0$ since $0< df\phi_th^{-1}(x,y)/dt = -2u'(t)(|y|^2/u^3(t)+u(t)|x|^2)$.
	By Lemmas \ref{piconstruct} and \ref{goodthom} we may find enhanced Thom data $\{(U_S,\pi_S,\rho_S,\gamma_S)\}$ for $M'$
	so that $U_S\subset V'_p$ and $\rho_Sh^{-1}(x,y) = |y|^2$.
	Note that the flow curves of $v$ are transverse to the level sets $\rho_S^{-1}(\delta)$ and flow in the direction
	of increasing $\rho_S$.
	So we need only choose our frontier limit functions $\delta_S$ so that they are a constant on the compact set
	$V_S = S-\bigcup_{T\prec S} \{x\in S\cap U_T \mid \rho_T(x)< \delta_T\pi_T(x)   \}$.	
\end{proof}
\end{proof}

 \section{Uniqueness of Special Coordinates}
 
 For no other reason than mathematical amusement, we now turn
 to the question of how unique special coordinates are:
 given two special local coordinates around a critical point,
 what can you say about the local diffeomorphism $h$ which changes
 one to the other.  The equations are:
 \begin{eqnarray}
v\circ h &=& dh \circ v\\
f\circ h &=& f
\end{eqnarray}
Plugging 0 into equation (1) we have $v(h(0)) = dh(0) = 0$
so $h(0)=0$.  If $A$ is the linear part of the Taylor series
of $h$, then the quadratic part of equation (2) is
$fA=f$ from which we deduce that 
$A=\begin{pmatrix} B&0\cr0&C\end{pmatrix}$
where $B$ and $C$ are orthogonal.
Since this linear part itself preserves special local
coordinates and the special coordinate preserving (local) diffeomorphisms form a group, we may as well suppose that 
the linear part $A$ is the identity.

From equation (1), we know that $h$ preserves the flow of $v$,
so $\phi_th = h\phi_t$ which means
\begin{eqnarray}
e^th_i(x,y) &=& h_i(e^tx,e^{-t}y)\ \mathrm{if}\  i\le k\\
e^{-t}h_i(x,y) &=& h_i(e^tx,e^{-t}y)\ \mathrm{if}\  i> k
\end{eqnarray}
for $t$ near 0, or more precisely, as long as $\phi_t$ 
remains within our coordinate patch.
Combining this with equation (2) we see\footnote{
Multiply (2) by $e^{2t}$,
take $d/dt$ and Bob's your uncle.} that in fact
equation (2) splits up as two equations:
\begin{eqnarray}
|x|^2 &=& \Sigma_{i=1}^k h_i(x,y)^2\\
|y|^2 &=& \Sigma_{i=k+1}^n h_i(x,y)^2.
\end{eqnarray}

Note that if $y=0$ then equation (3) is valid for all
$t\le 0$ so $h(sx,0) = sh(x,0)$ for all $s\in [0,1]$.
From this we conclude\footnote{Write $h(x,0) = x + \Sigma_{i=1}^k
\Sigma_{j=1}^i x_ix_jg_{ij}(x)$ for some smooth $g_{ij}$,
then we get $s\Sigma_{i=1}^k\Sigma_{j=1}^i x_ix_jg_{ij}(sx)
= \Sigma_{i=1}^k\Sigma_{j=1}^i x_ix_jg_{ij}(x)$. Letting $s\to 0$
we conclude the right hand side is 0.}
 $h(x,0)=x$.  Likewise, $h(0,y)=y$.
 
 Consequently, if $k=0$ or $k=n$ or $n=2$ then $h$ is the identity,
 so any special local coordinate preserving diffeomorphism is linear, in fact orthogonal.
 But in all other cases there are nonlinear examples as we will show.

 Pick $\epsilon$ small enough that our coordinate charts
 contain the $2\epsilon$ ball.
 We have a parameterized family $f_{y,s}$ of diffeomorphisms
of the unit $k-1$ sphere and a parameterized family
$g_{x,s}$ of diffeomorphisms of the unit $n-k-1$ sphere,
for $y$ in the unit $n-k-1$ sphere and $x$ in the unit
$k-1$ sphere and $s\in (0,\epsilon]$ defined by:
$$h(\epsilon x, sy) = (\epsilon f_{y,s}(x), s g_{x,s}(y)).$$
Note that as $s\to 0$ then $h(\epsilon x, sy) \to \epsilon x$
so $f_{y,s}\to$ the identity.
Also, by equations (3) and (4) with $t = \ln{s/\epsilon}$
we have 
$$ h(sx,\epsilon y) = (s f_{y,s}(x), \epsilon g_{x,s}(y))$$
so as $s\to 0$ we have $g_{x,s}\to$ the identity.
For general $x,y$ we likewise get
$$ h(x,y) = (|x|f_{y/|y|,s}(x/|x|), |y| g_{x/|x|,s}(y/|y|))$$
where $s = |y||x|/\epsilon$.

To get a nontrivial example if $n>k>1$ 
the trick is to pick $f$ and $g$ so that the resulting
$h$ is smooth.
For example, we could pick a nontrivial
smooth curve
$\tau\colon (-\epsilon,\epsilon)\to O(k)$ with $\tau(0)$ the identity and let
$g_{x,s}$ be the identity and let
$f_{y,s} = \tau(e^{-1/s})$.
A tedious\footnote{
We have $h(x,y) = (\tau(e^{-1/s})(x), y)$ so for $i\le k$
we have
$\Sigma_{j=1}^k x_j\partial h_i/\partial x_j - 
\Sigma _{j=1}^{n-k} y_j \partial h_i/\partial y_j
= \tau_i(e^{-1/s})(x)  + \tau_i'(e^{-1/s})(x) e^{-1/s}s^{-2}(
\Sigma_{j=1}^k x_j^2s/|x|^2 - \Sigma _{j=1}^{n-k} y_j^2 s/|y|^2
)= \tau_i(e^{-1/s})(x)= h_i(x,y)$.
This shows the tedious part of equation (1).} calculation shows that the resulting $h$
is $C^\infty$ smooth and satisfies equations (1) and (2).


\begin{thebibliography}{9}



\bibitem{AK}
S.~Akbulut and H.~King, \emph{Topology of Real Algebraic Sets},
MSRI Publications 25, Springer Verlag (1992).









\bibitem{BZ} J-M.~Bismut and W.~Zhang, \emph{An extension of a theorem by Cheeger and M\"uller}, Astérisque No. 205 (1992).

\bibitem{BFK} D.~Burghelea, L.~Friedlander, and T.~Kappeler, \emph{On the space of trajectories of a generic gradient-like vector field},
arXiv:1101.0788v2 [math.DS]

	\bibitem{GWPL}
	C.~G.~Gibson, K.~Wirthm\"{u}ller, A.~A.~du Plessis, E.~J.~N.~Loo\ij enga,
	\emph{Topological Stability of Smooth Mappings},
	Lecture Notes in Mathematics 552, Springer-Verlag (1976).
	
	\bibitem{corners}
	D.~Joyce, \emph{On manifolds with corners}, arXiv:0910.3518v2 [math.DG]
	
\bibitem{Tico}
H.~King,
\emph{Tico Spines}
arXiv:1602.02608 [math.GT]


\bibitem{KT} H.~King and D.~Trotman, \emph{Poincare-Hopf Theorems on Singular Spaces}, 
Proceedings of the London Math.~Soc., (2014) 108 (3), pp.~682-703.

\bibitem{Mather}
J.~Mather, \emph{Notes on Topological Stability}, currently available at http://web.math.princeton.edu/facultypapers/mather/notes\_on\_topological\_stability.pdf


\bibitem{N} L.~Nicolaescu, \emph{Tame Flows}, AMS Memoirs 980.

\bibitem{Q1} L.~Qin, \emph{ On moduli spaces and CW structures arising from Morse theory on Hilbert manifolds}, J. Topol. Anal. 2 (2010), no. 4, 469–526.

\bibitem{Q2} L.~Qin, \emph{ An application of topological equivalence to Morse Theory}, arXiv:1102.2838 [math.GT]

\bibitem{Thom} R.~Thom, \emph{Ensembles et morphismes stratifi\'es}, 
Bull.~A.~M.~S., 75 (1969), pp.~240-284.


\end{thebibliography}
\end{document}